\DeclareMathOperator*{\esssup}{ess\,sup}
\DeclareMathOperator*{\essinf}{ess\,inf}
\DeclareMathOperator*{\arginf}{arg\,inf}
\newtheorem{theorem}{Theorem}
\newtheorem{corollary}[theorem]{Corollary}
\newtheorem{definition}[theorem]{Definition}
\newtheorem{claim}[theorem]{Claim}
\newtheorem{remark}[theorem]{Remark}
\newtheorem{example}[theorem]{Example}
\def\BibTeX{{\rm B\kern-.05em{\sc i\kern-.025em b}\kern-.08em
    T\kern-.1667em\lower.7ex\hbox{E}\kern-.125emX}}
\begin{document}
\history{Date of publication xxxx 00, 0000, date of current version xxxx 00, 0000.}
\doi{10.1109/ACCESS.2017.DOI}

\title{Refined Pinsker's and reverse Pinsker's inequalities for probability distributions of different dimensions}
\author{\uppercase{Michele Caprio}\authorrefmark{1}}
\address[1]{PRECISE Center, Department of Computer and Information  Science, 
University of Pennsylvania, 3330 Walnut Street, Philadelphia, PA 19104 USA (e-mail: caprio@seas.upenn.edu, ORCID iD: 0000-0002-7569-097X)}
\tfootnote{This work was supported in part by the National Science Foundation (CCF 1934964) and by the Army Research Office (ARO MURI W911NF2010080).}

\markboth
{Caprio: Pinsker's and reverse Pinsker's for distributions of different dimensions}
{Caprio: Pinsker's and reverse Pinsker's for distributions of different dimensions}

\corresp{Corresponding author: Michele Caprio (e-mail: caprio@seas.upenn.edu).}

\begin{abstract}
We provide optimal lower and upper bounds for the augmented Kullback-Leibler divergence in terms of the augmented total variation distance between two probability measures defined on two Euclidean spaces having different dimensions. We call them refined Pinsker's and reverse Pinsker's inequalities, respectively.
\end{abstract}

\begin{keywords}
Kullback-Leibler divergence, total variation distance, optimal bounds, probability measures of different dimensions.
\end{keywords}

\titlepgskip=-15pt

\maketitle

\section{Introduction}\label{intro}
Bounding the Kullback-Leibler (KL) divergence between probability measures (pm's) defined on the same space in terms of their total variation (TV) distance is a well studied problem, of paramount importance in statistics and machine learning. Famous lower bounds are given by Pinsker's inequality \cite{csiszar} and Vajda's lower bound \cite{vajda}, while a famous upper bound is given by reverse Pinsker's inequality \cite{geiger,sason}. These results are particularly useful in Bayesian nonparametrics \cite{binette2} and in the optimal quantization of pm's \cite{geiger}. 

In this note, we generalize results from \cite{binette,fedotov} to find the optimal (defined below) lower and upper bounds for the KL divergence between pm's defined on two Euclidean spaces having different dimensions in terms of their TV distance. The generalizations of KL divergence and TV distance to pm's of different dimensions are called augmented KL divergence (AKL) and augmented total variation distance (ATV), respectively, and were first introduced in \cite{lim2}. The AKL and the ATV could be used to measure the loss of information after projecting a probability measure $P$ down to a lower-dimensional subspace, e.g. via principal component analysis (PCA). That is, an interesting open research question is to determine whether the larger the AKL or the ATV between $P$ and its projection $\text{Proj}(P)$, the more likely it is to lose information in the projecting process, and if such loss depends on the projection we use. Another interesting information-theoretic application of AKL is the following: it can be used to calculate the divergence between two different dimensional distributions in the field of multi-target labeled probability distributions of a hybrid of continuous state and discrete label variables \cite[Remark 3 and Equation (63)]{li_paper}.

The main result of this paper, Theorem \ref{main3}, states that for any given value $\delta$ of the ATV between two generic distributions defined on Euclidean spaces having different dimensions, we can give optimal lower and upper bounds to their augmented KL divergence.\footnote{As we shall see, the upper bound requires a mild assumption to hold.} An interesting byproduct of Theorem \ref{main3}, explored in Example \ref{ex1}, is that we can also give optimal bounds to ATV in terms of (a fixed value of) the AKL. Notice also that, paraphrasing \cite[Section I]{fedotov}, knowing the relation between AKL and ATV enables to translate results from information theory -- results involving the AKL -- to results in probability theory -- results involving the ATV -- and vice versa.

When $P$ and $Q$ are defined on the same space, ``optimality'' should be understood as follows. For the refined Pinsker's inequality, we mean the best lower bound on the KL divergence between $P$ and $Q$ given that their TV distance is some fixed value $\delta\geq 0$, that is, $\inf_{d_{TV}(P,Q)=\delta}D_{KL}(P\|Q)$. For the refined reverse Pinsker's inequality, we mean the best upper bound on the KL divergence between $P$ and $Q$ over the class $\mathcal{A}(\delta,m,M)$ of pm's whose TV distance is equal to $\delta\geq 0$ and whose relative density $\text{d}P/\text{d}Q$ has finite lower and upper bounds $m$ and $M$, respectively, introduced in Definition \ref{tight_rev_pins}.\footnote{The concept of relative density will be introduced in section \ref{prelim}.} That is, $\sup_{(P,Q)\in\mathcal{A}(\delta,m,M)}D_{KL}(P\|Q)$. As we can see, the meaning of ``optimality'' for the upper bound is slightly less general than that for the lower bound. As pointed out in \cite[Section 1]{sason}, this is due to the fact that for any $\varepsilon>0$, there exists a pair $P,Q$ of pm's such that $d_{TV}(P,Q)\leq \varepsilon$ while $D_{KL}(P\|Q)=\infty$. Consequently, a reverse Pinsker's inequality which provides an upper bound on the KL divergence between $P$ and $Q$ when their TV distance is some fixed $\delta\geq 0$ may not exist in general, whence the necessity of working with $\mathcal{A}(\delta,m,M)$. The generalizations of these ``optimality'' concepts to AKL and ATV are given in section  \ref{main.section}.

The note is divided as follows. Section \ref{prelim} gives the needed background, and section \ref{main.section} presents our main result. Section \ref{concl} is a discussion.

\section{Preliminaries}\label{prelim}
\subsection{Probability measures on the same measurable space}
Pick two pm's $P,Q$ defined on the same measurable space $(\Omega,\mathcal{F})$ and assume $P$ is absolutely continuous with respect to $Q$, written $P\ll Q$. This means that $Q(A)=0$ implies $P(A)=0$, $A\in\mathcal{F}$. Denote by $\text{d}P/\text{d}Q$ the relative density of $P$ with respect to $Q$, that is, $\text{d}P/\text{d}Q \equiv \mathfrak{f}$ is an $\mathcal{F}$-measurable functional on $\Omega$ such that for all $A\in\mathcal{F}$,
$$P(A)=\int_A \mathfrak{f} \text{ d}Q.$$
Then, the KL divergence and the TV distance between $P$ and $Q$ are defined as
\begin{align}\label{dtv}
\begin{split}
    &D_{KL}(P\|Q):=\int_\Omega \log\left( \frac{\text{d}P}{\text{d}Q}  \right)\text{d}P\\ 
    \text{and} \quad &d_{TV}(P,Q):=\sup_{A\in\mathcal{F}} \left| P(A)-Q(A) \right|,
    \end{split}
\end{align}
respectively.\footnote{We do not need the absolute continuity assumption to hold for the TV metric.} Consider the following function, that -- given some $\delta\geq 0$ -- selects the smallest possible value of the KL divergence between pm's whose TV distance is equal to $\delta$ 
\begin{equation}\label{tight_pinsker}
   \delta \mapsto L(\delta):=\inf_{d_{TV}(P,Q)=\delta} D_{KL}(P\| Q). 
\end{equation}
It is called the \textit{Vajda's lower bound} \cite{vajda}. The following comes from \cite[Theorem 1]{fedotov}.
\begin{theorem}\label{fedotov}\textbf{(Fedotov, Harremoës, and Topsøe)}
Pick two probability measures $P,Q$ defined on a generic measurable space $(\Omega,\mathcal{F})$ and assume $d_{TV}(P,Q)=\delta\geq 0$. Then, curve $\gamma:\delta \mapsto (\delta,L(\delta))$ is a differentiable curve in the $(d_{TV},D_{KL})$-plane, symmetric around the $D_{KL}$-axes. 
In addition, using $t=\frac{\text{d}L}{\text{d}\delta}\in\mathbb{R}_+$ as a parameter, $\gamma$ is parametrized by
\begin{align}\label{param_gamma}
\begin{split}
    \delta(t)&=t\left(1-\left(\coth(t)-\frac{1}{t} \right)^2 \right)\\
    L(\delta(t)&)=\log\left( \frac{t}{\sinh (t)} \right) + t\coth (t) -\frac{t^2}{\sinh^2(t)}.
\end{split}
\end{align}
\end{theorem}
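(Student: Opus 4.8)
The plan is to reduce the variational problem defining $L(\delta)$ to an optimization over binary (two-atom) distributions, solve that problem by one-variable calculus, and then extract both the parametrization \eqref{param_gamma} and the regularity claims. First, note that pairs with $P\not\ll Q$ contribute $D_{KL}=+\infty$ and may be discarded, since for every admissible $\delta$ there already exist binary pairs with $P\ll Q$ and $d_{TV}(P,Q)=\delta$. For a pair with $P\ll Q$ and relative density $\mathfrak f$, put $A:=\{\mathfrak f>1\}$; then $P(A)-Q(A)=\int_A(\mathfrak f-1)\,\mathrm{d}Q=d_{TV}(P,Q)=\delta$, so $A$ attains the supremum in \eqref{dtv}. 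Pushing $P$ and $Q$ forward through $T:=\mathbf 1_A$ gives binary laws $\bar P,\bar Q$ on $\{0,1\}$ with $\bar P(\{1\})=P(A)$ and $\bar Q(\{1\})=Q(A)$; the data-processing inequality for $D_{KL}$ yields $D_{KL}(\bar P\|\bar Q)\le D_{KL}(P\|Q)$, while $d_{TV}(\bar P,\bar Q)\le d_{TV}(P,Q)$ (data processing again), together with $d_{TV}(\bar P,\bar Q)\ge|\bar P(\{1\})-\bar Q(\{1\})|=\delta$, forces $d_{TV}(\bar P,\bar Q)=\delta$. Since binary pairs form a subfamily, this gives $L(\delta)=\inf\{D_{KL}(\bar P\|\bar Q):\bar P,\bar Q\text{ binary},\ d_{TV}(\bar P,\bar Q)=\delta\}$.

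Next, write binary laws as $P=(p,1-p)$ and $Q=(q,1-q)$. By the relabeling symmetry $0\leftrightarrow 1$ we may assume $p\ge q$, so $d_{TV}=p-q=\delta$, and the problem becomes the minimization over $q\in(0,1-\delta]$ of
\[
\varphi(q):=(q+\delta)\log\frac{q+\delta}{q}+(1-q-\delta)\log\frac{1-q-\delta}{1-q}.
\]
Since $\varphi$ is smooth, $\varphi(q)\to+\infty$ as $q\to0^+$, and $\varphi'(q)\to+\infty$ as $q\to(1-\delta)^-$, the minimum is attained at an interior critical point $q^\ast=q^\ast(\delta)$, and $\varphi'(q^\ast)=0$ reduces to $\log\frac{(q^\ast+\delta)(1-q^\ast)}{q^\ast(1-q^\ast-\delta)}=\frac{\delta}{q^\ast(1-q^\ast)}$. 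Putting $L(\delta)=\varphi(q^\ast(\delta))$, the envelope theorem gives $L'(\delta)=\partial_\delta\varphi|_{q=q^\ast}=\log\frac{(q^\ast+\delta)(1-q^\ast)}{q^\ast(1-q^\ast-\delta)}$, which by the critical-point equation equals $\delta/[q^\ast(1-q^\ast)]$. Denoting this common value by the parameter $t:=L'(\delta)$, one obtains the two relations $\delta=t\,q^\ast(1-q^\ast)$ and $e^{t}=\frac{(q^\ast+\delta)(1-q^\ast)}{q^\ast(1-q^\ast-\delta)}$.

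It then remains to eliminate $q^\ast$: substituting $\delta=t\,q^\ast(1-q^\ast)$ into the exponential relation and simplifying yields $q^\ast=\tfrac1t-\tfrac1{e^{t}-1}$, after which $\delta$ and $L(\delta)$ become explicit elementary functions of $t$; using $\tfrac1{e^t-1}=\tfrac12(\coth\tfrac t2-1)$, $(e^t-1)^2=4e^{t}\sinh^2\tfrac t2$, and collecting terms, these become \eqref{param_gamma} (with total variation normalized as the $\ell^1$ distance $\|P-Q\|_1=2d_{TV}$, as in \eqref{param_gamma}, which rescales the abscissa by $2$ and the parameter by $\tfrac12$). Finally, $t\mapsto\delta(t)$ is smooth on $(0,\infty)$ with continuous extension $\delta(0)=0$, and a direct estimate shows $\delta'(t)>0$ with $\delta(t)$ increasing onto the whole admissible interval; hence $\delta(\cdot)$ has a smooth inverse, $L$ is a differentiable function of $\delta$ parametrized as claimed, and $\gamma$ is a differentiable curve with nonvanishing velocity. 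The symmetry about the $D_{KL}$-axis is immediate since $L$ depends on $(P,Q)$ only through the nonnegative number $d_{TV}(P,Q)$: extending $L$ to $\delta<0$ by $L(-\delta):=L(\delta)$, the curve is symmetric about that axis, and it remains differentiable at $\delta=0$ because $L'(0^+)=\lim_{t\to0^+}t=0$.

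I expect the main obstacle to be the verification in the last two steps: confirming that the interior critical point of $\varphi$ is its global minimizer (so that $L$ is genuinely realized by it and the envelope argument is legitimate), carrying out the elimination of $q^\ast$ cleanly, and --- most importantly --- checking that $\delta(t)$ is strictly increasing, since this is exactly what makes the parametrized curve the graph of a bona fide differentiable function $L$. By contrast, the reduction to binary alphabets via the data-processing inequality is robust and constitutes the conceptual heart of the proof.
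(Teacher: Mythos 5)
Your argument is correct (modulo the routine verifications you yourself flag: uniqueness/globality of the interior critical point, the algebra producing the $L$ formula, and the monotonicity of $t\mapsto\delta(t)$ — all of which do check out, e.g.\ numerically at $t=1$ both sides give $\delta\approx0.902$, $L\approx0.4276$), but it takes a different route from the paper, which in fact offers no proof of this theorem at all: it is imported verbatim from Fedotov, Harremo\"es, and Topsøe. The closest the paper comes is the proof of Claim~\ref{claim_str_lb}, which reproduces the Fedotov et al.\ strategy for the augmented analogue: restrict to a two-point alphabet, compute the convex conjugate $D^\star(x,y)=\sup_{p,q}(xp+yq-D_{KL}(p,q))$, and read off the parametrization by evaluating along the line $(x,y)=(-2t,2t)$, with $t=\mathrm{d}L/\mathrm{d}\delta$ emerging from Legendre duality. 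Your version replaces the duality computation with a direct constrained minimization plus the envelope theorem; the two are Legendre-dual descriptions of the same stationarity conditions, and your relations $\delta=t\,q^\ast(1-q^\ast)$, $e^t=\frac{(q^\ast+\delta)(1-q^\ast)}{q^\ast(1-q^\ast-\delta)}$ are exactly the paper's equations \eqref{beta1} in other coordinates. What your route buys is (i) a rigorous justification of the reduction to binary alphabets via the set $A=\{\mathfrak f>1\}$ and data processing, which the paper (following Fedotov et al.) simply declares to be without loss of generality, and (ii) an elementary, self-contained derivation; what the duality route buys is a cleaner conceptual reason why $t=L'(\delta)$ is the right parameter and an argument that transfers to the augmented setting of Claim~\ref{claim_str_lb}. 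One genuine merit of your write-up: you correctly notice and repair the normalization mismatch — the theorem as stated uses $d_{TV}=\sup_A|P(A)-Q(A)|\in[0,1]$ from \eqref{dtv}, while \eqref{param_gamma} is in the $\ell^1$ (i.e.\ $[0,2]$) convention, a discrepancy the paper only acknowledges later inside the proof of Claim~\ref{claim_str_lb}.
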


In \cite[Corollary 1]{reid}, the authors give an explicit value for $L(\delta)$, in contrast with \eqref{param_gamma} where the value is implicit.

\begin{corollary}\label{reid_cor}\textbf{(Reid and Williamson)}
Pick two probability measures $P,Q$ defined on a generic measurable space $(\Omega,\mathcal{F})$ and assume $d_{TV}(P,Q)=\delta\geq 0$. Then,
\begin{align}\label{reid_eq}
    L(\delta) = \min_{\gamma\in[\delta-2,2-\delta]} \bigg[ &\left( \frac{\delta+2-\gamma}{4} \right) \log\left( \frac{\gamma-2-\delta}{\gamma-2+\delta} \right) \nonumber \\
    + &\left( \frac{\gamma+2-\delta}{4} \right) \log\left( \frac{\gamma+2-\delta}{\gamma+2+\delta} \right) \bigg].
\end{align}
\end{corollary}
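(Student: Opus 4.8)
The plan is to collapse the infinite‑dimensional variational problem defining $L(\delta)$ in \eqref{tight_pinsker} to a one‑parameter minimization over pairs of laws on a two‑point space, where the objective is elementary; the bracketed expression in \eqref{reid_eq} is then exactly that minimization, written out after a convenient change of variables.

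First I would show that the infimum in \eqref{tight_pinsker} is attained by, and hence equals the infimum over, binary pairs. Fix any $P,Q$ on a measurable space $(\Omega,\mathcal{F})$ with $d_{TV}(P,Q)=\delta$; one may assume $P\ll Q$, since otherwise $D_{KL}(P\|Q)=\infty$ and the pair is irrelevant. By the Hahn decomposition of the finite signed measure $P-Q$ there is a set $A\in\mathcal{F}$ such that, writing $\tilde{P}=(P(A),P(A^c))$ and $\tilde{Q}=(Q(A),Q(A^c))$ for the push‑forwards of $P,Q$ along $\mathbf{1}_{A}\colon\Omega\to\{0,1\}$, one has $d_{TV}(\tilde{P},\tilde{Q})=d_{TV}(P,Q)=\delta$ — total variation never increases under a map, and a Hahn set makes this an equality — while the data‑processing inequality gives $D_{KL}(\tilde{P}\|\tilde{Q})\le D_{KL}(P\|Q)$. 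Since binary pairs are themselves admissible in \eqref{tight_pinsker}, the two infima coincide; and since, after parametrization, the objective is a lower‑semicontinuous function of a single real variable over a compact interval, the infimum is a minimum. Thus
\[
L(\delta)=\min\big\{D_{KL}(\tilde{P}\|\tilde{Q}):\ \tilde{P},\tilde{Q}\ \text{laws on }\{0,1\},\ d_{TV}(\tilde{P},\tilde{Q})=\delta\big\}.
\]

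Next I would compute this minimum explicitly. Writing $\tilde{P}=(p,1-p)$ and $\tilde{Q}=(q,1-q)$, the constraint $d_{TV}(\tilde{P},\tilde{Q})=\delta$ fixes $q$ in terms of $p$ and $\delta$ (up to relabelling the two points), so that $D_{KL}(\tilde{P}\|\tilde{Q})=p\log\frac{p}{q}+(1-p)\log\frac{1-p}{1-q}$ becomes a function of $p$ alone, with $p$ ranging over the interval on which $\tilde{P},\tilde{Q}$ are genuine probability vectors. Introducing a new parameter $\gamma$ via $p=\frac{\delta+2-\gamma}{4}$ (so that $1-p=\frac{\gamma+2-\delta}{4}$) turns that interval into $\gamma\in[\delta-2,2-\delta]$, and routine simplification of the two logarithmic terms turns $D_{KL}(\tilde{P}\|\tilde{Q})$ into exactly the bracketed expression in \eqref{reid_eq}, which proves the corollary. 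Alternatively, one could skip the reduction entirely and instead verify that the stationarity condition for the minimization in \eqref{reid_eq}, re‑expressed through $t=\mathrm{d}L/\mathrm{d}\delta$, recovers the parametrization \eqref{param_gamma} of Theorem \ref{fedotov}; but the route through binary laws is shorter.

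The computation in the last step is routine algebra; the part needing care is the reduction. I expect the main obstacle to be justifying the passage to binary laws in full generality: that the total variation distance between arbitrary $P$ and $Q$ on an arbitrary measurable space is genuinely realized by a single measurable set (so the push‑forward construction is available), that the data‑processing inequality for $D_{KL}$ applies with no extra hypotheses, and that the degenerate binary pairs at the endpoints $\gamma=\delta\pm2$ are accounted for correctly — there one or more arguments of the logarithms tend to $0$, which is why \eqref{reid_eq} must be read as a minimum over the \emph{closed} interval.
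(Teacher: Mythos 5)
The paper offers no proof of Corollary~\ref{reid_cor} to compare against: it is imported verbatim from Reid and Williamson's paper, so your blind derivation is the only argument on the table. It is the standard one and is essentially sound: the reduction of the infimum in \eqref{tight_pinsker} to binary pairs via the Hahn set of $P-Q$ (which realizes $d_{TV}$ exactly as a push-forward and, by data processing, does not increase $D_{KL}$) is correct, and the bracket in \eqref{reid_eq} is indeed $p\log(p/q)+(1-p)\log\left((1-p)/(1-q)\right)$ under $p=\tfrac{\delta+2-\gamma}{4}$, $q=\tfrac{2-\gamma-\delta}{4}$, with $\gamma\in[\delta-2,2-\delta]$ precisely the range making $(p,q)\in[0,1]^2$; your remark about the endpoints is also right (at $\gamma=2-\delta$ one gets $q=0$ and the bracket is $+\infty$ for $\delta>0$, so the minimum is attained in the interior). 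The one point you should make explicit rather than bury in ``routine simplification'': that substitution gives $p-q=\delta/2$, so \eqref{reid_eq} is calibrated to the $[0,2]$-valued variational divergence $\sum_i|p_i-q_i|=2|p-q|$, not to the $[0,1]$-valued $d_{TV}$ of \eqref{dtv}, for which $d_{TV}(\tilde P,\tilde Q)=|p-q|$ on a two-point space. If you literally impose $|p-q|=\delta$ as you state, your computation produces the formula with $\delta$ replaced by $2\delta$. This normalization tension is inherited from the paper itself (the parametrization in Theorem~\ref{fedotov} also lets $\delta$ range over $[0,2]$), but a complete proof must fix one convention and carry it through; otherwise the argument is fine.
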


We now define $\mathcal{A}(\delta,m,M)$, a set of pairs of probabilities that will be useful in the rest of the work. Before doing so, we need to introduce the concepts of essential infimum $\essinf \mathfrak{f}$ and essential supremum $\esssup \mathfrak{f}$ of $\text{d}P/\text{d}Q \equiv \mathfrak{f}$ with respect to $Q$. We have that
\begin{align*}
    \essinf \mathfrak{f} &:= \sup \left\lbrace{b\in\mathbb{R} : Q\left( \left\lbrace{\omega\in\Omega : \mathfrak{f}(\omega)<b}\right\rbrace \right)=0}\right\rbrace,\\
    \esssup \mathfrak{f} &:= \inf \left\lbrace{a\in\mathbb{R} : Q\left( \left\lbrace{\omega\in\Omega : \mathfrak{f}(\omega)>a}\right\rbrace \right)=0}\right\rbrace.
\end{align*}

\begin{definition}\label{tight_rev_pins}
Fix $\delta\geq 0$, $m>0$, and $M<\infty$. We call $\mathcal{A}(\delta,m,M)$ the set of all pm's pairs $(P,Q)$ defined on a common measurable space $(\Omega,\mathcal{F})$ satisfying
\begin{enumerate}
    \item $P\ll Q$,
    \item $\essinf \frac{\text{d}P}{\text{d}Q}=m$,
    \item $\esssup \frac{\text{d}P}{\text{d}Q}=M$,
    \item $d_{TV}(P,Q)=\delta$.
    \end{enumerate}
\end{definition}

The optimal upper bound for the KL divergence between (a pair of) pm's belonging to $\mathcal{A}(\delta,m,M)$ is defined as
\begin{equation}\label{opt_u_b}
    U(\mathcal{A}(\delta,m,M)):=\sup_{(P,Q)\in\mathcal{A}(\delta,m,M)}D_{KL}(P\|Q).
\end{equation}

We have the following important result.

\begin{theorem}\label{first_main}
Pick any $\delta\geq 0$, $m>0$, $M<\infty$, and assume $\mathcal{A}(\delta,m,M)\neq \emptyset$. Then, for all $(P,Q)\in\mathcal{A}(\delta,m,M)$, the following are optimal bounds
\begin{equation}\label{bds}
    L(\delta) \leq D_{KL}(P\| Q) \leq U(\mathcal{A}(\delta,m,M)).
\end{equation}
\end{theorem}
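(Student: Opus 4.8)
The plan is to unwind the two definitions \eqref{tight_pinsker} and \eqref{opt_u_b}, since the statement is, at bottom, a consequence of the set inclusion $\mathcal{A}(\delta,m,M)\subseteq\{(P',Q'):P'\ll Q'\text{ and }d_{TV}(P',Q')=\delta\}$. First I would fix an arbitrary $(P,Q)\in\mathcal{A}(\delta,m,M)$ and note that item~4 of Definition~\ref{tight_rev_pins} gives $d_{TV}(P,Q)=\delta$. Feeding this into \eqref{tight_pinsker} yields $D_{KL}(P\|Q)\ge\inf_{d_{TV}(P',Q')=\delta}D_{KL}(P'\|Q')=L(\delta)$, the left inequality in \eqref{bds}; and because $(P,Q)$ is one of the pairs over which the supremum in \eqref{opt_u_b} is taken, $D_{KL}(P\|Q)\le U(\mathcal{A}(\delta,m,M))$, the right inequality. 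Here the hypothesis $\mathcal{A}(\delta,m,M)\neq\emptyset$ is exactly what makes $U(\mathcal{A}(\delta,m,M))$ a genuine, finite quantity — indeed $U(\mathcal{A}(\delta,m,M))\le\log M<\infty$, since $\text{d}P/\text{d}Q\le M$ holds $Q$-almost everywhere — rather than $\sup\emptyset=-\infty$.

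For optimality, the point I would emphasise is that the two bounds are optimal in the two distinct senses singled out in the introduction. The lower bound is optimal in the sense of \eqref{tight_pinsker}: by construction $L(\delta)$ is the greatest constant that lower-bounds $D_{KL}(P'\|Q')$ simultaneously over every pair with $d_{TV}(P',Q')=\delta$, so it cannot be increased without failing on some such pair; Theorem~\ref{fedotov} (equivalently Corollary~\ref{reid_cor}) additionally makes $L(\delta)$ explicit and certifies that it is finite. The upper bound is optimal in the sense of \eqref{opt_u_b}: $U(\mathcal{A}(\delta,m,M))$ is the least upper bound valid over the whole class $\mathcal{A}(\delta,m,M)$, hence it is approached along a sequence of pairs in that class and cannot be lowered.

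I do not foresee a genuine obstacle; the only care needed is to keep the two optimality notions apart and to remember why one must descend to $\mathcal{A}(\delta,m,M)$ for the upper bound in the first place — as recalled in the introduction, for every $\varepsilon>0$ there are $P,Q$ with $d_{TV}(P,Q)\le\varepsilon$ yet $D_{KL}(P\|Q)=\infty$, so a reverse Pinsker inequality phrased purely in terms of a fixed $d_{TV}=\delta$ cannot exist, and constraining the essential infimum and supremum of the relative density to be $m$ and $M$ is the device that restores finiteness. I would also record that the lower bound is \emph{not} in general optimal over the restricted class $\mathcal{A}(\delta,m,M)$ itself: when $\delta=(M-1)(1-m)/(M-m)$ the relative density is forced to take only the two values $m$ and $M$, so $\inf_{(P,Q)\in\mathcal{A}(\delta,m,M)}D_{KL}(P\|Q)$ is then a fixed number that can strictly exceed $L(\delta)$ — which is precisely why optimality of $L(\delta)$ is asserted only in the unrestricted sense of \eqref{tight_pinsker}.
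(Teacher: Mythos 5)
Your argument is correct and takes essentially the same route as the paper's own proof, which likewise treats both inequalities as immediate consequences of the definitions \eqref{tight_pinsker} and \eqref{opt_u_b} once one notes that $d_{TV}(P,Q)=\delta$ for every $(P,Q)\in\mathcal{A}(\delta,m,M)$, with optimality holding by construction in exactly the two senses you distinguish. The only substantive item the paper's proof adds is the explicit closed form $U(\mathcal{A}(\delta,m,M))=\delta\bigl(\log(M^{-1})/(1-M^{-1})+\log(m^{-1})/(m^{-1}-1)\bigr)$ quoted from the cited reference, which is used later in the paper but is not needed to establish \eqref{bds} itself.
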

\begin{proof}
The optimal upper bound for $D_{KL}(P\| Q)$ comes from equation \eqref{opt_u_b}. Its value, given in \cite[Equation (9)]{binette}, is
\begin{equation}\label{opt_u_b_value}
    U(\mathcal{A}(\delta,m,M)) = \delta \left( \frac{\log(M^{-1})}{1-M^{-1}} + \frac{\log(m^{-1})}{m^{-1}-1}  \right).
\end{equation}
The optimal lower bound comes from equation \eqref{param_gamma}. An implicit parametric solution of the form
of the graph of Vajda's lower bound as $(V (t), L(t))_{t\in\mathbb{R}_+}$ is given in Theorem \ref{fedotov}, while an explicit value for $L(\delta)$ is given in Corollary \ref{reid_cor}.
\end{proof}
Notice that in the case where $m=1$ or $M=1$, any $(P,Q)\in\mathcal{A}(\delta,m,M)$ must be such that $\delta=d_{TV}(P,Q)=0$. The right hand side of \eqref{opt_u_b_value} is then understood as being equal to $0$. In addition, the assumption that the pair $(P,Q)$ belongs to $\mathcal{A}(\delta,m,M)$ is only needed to obtain the upper bound in \eqref{bds}, as pointed out in section \ref{intro}.

In \cite[Theorem 7]{fedotov}, the authors find a lower bound for $L(\delta)$ that makes computing a lower bound for the KL divergence in terms of the TV metric easier.
\begin{theorem}\label{thm2}\textbf{(Fedotov, Harremoës, and Topsøe)}
Pick two probability measures $P,Q$ defined on a generic measurable space $(\Omega,\mathcal{F})$ and assume $d_{TV}(P,Q)=\delta\geq 0$. Then, the following is true
$$L(\delta)\geq \frac{1}{2}\delta^2+\frac{1}{36}\delta^4+\frac{1}{270}\delta^6+\frac{221}{340200}\delta^8.$$
\end{theorem}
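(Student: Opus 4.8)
The plan is to read $L$ off from the exact parametrization in Theorem~\ref{fedotov}. Writing $\delta(t)=t\bigl(1-(\coth t-\tfrac1t)^2\bigr)$ and recalling that the parameter there is $t=\mathrm{d}L/\mathrm{d}\delta$, together with $\delta(t)\to0$ and $L(\delta(t))\to0$ as $t\to0^+$, we obtain $L(\delta)=\int_0^\delta t(s)\,\mathrm{d}s$, where $s\mapsto t(s)$ is the functional inverse of the strictly increasing map $t\mapsto\delta(t)$ (note $\delta'(0)=1$) on the relevant interval. Thus everything reduces to the Maclaurin series of the inverse function, followed by term-by-term integration.

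Concretely, I would first expand $\delta$ in powers of $t$. Expanding the partial-fraction series $\coth t-\tfrac1t=\sum_{n\ge1}\tfrac{2t}{t^2+n^2\pi^2}$ in powers of $t$ gives $\sum_{j\ge0}(-1)^j\tfrac{2\zeta(2j+2)}{\pi^{2j+2}}\,t^{2j+1}=\tfrac t3-\tfrac{t^3}{45}+\tfrac{2t^5}{945}-\cdots$, and squaring and multiplying by $t$ yields $\delta(t)=t-\tfrac19t^3+\tfrac{2}{135}t^5-\tfrac1{525}t^7+\cdots$. Inverting this odd power series (by undetermined coefficients, or Lagrange inversion) gives $t(\delta)=\delta+\tfrac19\delta^3+\tfrac1{45}\delta^5+\tfrac{221}{42525}\delta^7+\cdots$, and integrating,
\[
L(\delta)=\frac12\delta^2+\frac1{36}\delta^4+\frac1{270}\delta^6+\frac{221}{340200}\delta^8+\cdots .
\]
Hence the right-hand side of the claimed inequality is exactly the degree-$8$ Maclaurin truncation of $L$, so the theorem is equivalent to the statement that the omitted tail is nonnegative on the range of $\delta$ produced by the parametrization.

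It therefore suffices to show that every Maclaurin coefficient of $t(\cdot)$ --- equivalently, of $L$ --- is nonnegative; the tail is then a power series in $\delta$ with nonnegative coefficients evaluated at a nonnegative argument, and the theorem follows. (Equivalently, by Taylor's theorem with Lagrange remainder and $L^{(9)}(0)=0$, it would be enough to prove the single inequality $L^{(10)}(\delta)\ge0$ on the range of interest.) I expect this positivity to be the main obstacle, and it cannot rest on sign patterns alone: since $(\coth t-\tfrac1t)^2=\sum_{k\ge1}\gamma_k t^{2k}$ has $\operatorname{sign}\gamma_k=(-1)^{k+1}$ (from the Cauchy product of the alternating series for $\coth t-\tfrac1t$), the series $\delta(t)/t$ has coefficient signs $(+,-,+,-,\dots)$, yet a generic odd power series with that pattern can invert to a series with negative coefficients. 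What rescues positivity here is the geometric decay of the coefficients of $\coth t-\tfrac1t$, whose magnitudes $\tfrac{2\zeta(2j+2)}{\pi^{2j+2}}$ lie in $[\,2\pi^{-2j-2},\ \tfrac13\pi^{-2j}\,]$; this forces the ``cross terms'' in the Lagrange-inversion formula to be small relative to the leading powers of $\tfrac19$, so that each coefficient of $t(\cdot)$ stays nonnegative. Carrying out this estimate, together with the explicit values of the first four coefficients computed above, yields the stated bound.
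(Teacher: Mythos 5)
First, a framing remark: the paper does not prove Theorem~\ref{thm2} at all --- it is imported verbatim from \cite[Theorem 7]{fedotov} --- so there is no in-paper proof to compare against; I am judging your attempt on its own terms and against the source. The computational half of your proposal is correct and I have verified it: from the Laurent series of $\coth$ one gets $\delta(t)=t-\tfrac{1}{9}t^3+\tfrac{2}{135}t^5-\tfrac{1}{525}t^7+\cdots$, the inverse series is $t(\delta)=\delta+\tfrac{1}{9}\delta^3+\tfrac{1}{45}\delta^5+\tfrac{221}{42525}\delta^7+\cdots$, and since $t=\mathrm{d}L/\mathrm{d}\delta$ with $L(0)=0$, term-by-term integration shows that the right-hand side of the theorem is exactly the degree-$8$ Maclaurin polynomial of $L$. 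That is a genuine and correctly executed reduction.

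The gap is that this is where the actual theorem begins, and your proposal stops there. What you have proved is the local statement $L(\delta)=P_8(\delta)+O(\delta^{10})$; the theorem is a global inequality on the whole range of $\delta$ covered by the parametrization (up to $2$ in the normalization of \eqref{param_gamma}), and the step that would deliver it --- nonnegativity of every Maclaurin coefficient of $t(\cdot)$ --- is only asserted. The paragraph about the geometric decay of $2\zeta(2j+2)/\pi^{2j+2}$ ``forcing the cross terms to be small'' is a heuristic, not an estimate: no bound on a general Lagrange-inversion coefficient is derived, and you yourself concede that sign patterns alone cannot settle it. Worse, this is probably not a salvageable route: Fedotov, Harremo\"es, and Tops\o e note that the positivity pattern of these Taylor coefficients does not persist to all orders (which is precisely why their bound truncates at the eighth-order term), and their own proof proceeds by a direct analysis of the parametrized curve $t\mapsto(\delta(t),L(\delta(t)))$ rather than by coefficient positivity. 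Two further points are silently assumed even under your strategy: that $t\mapsto\delta(t)$ is strictly increasing on all of $[0,\infty)$ (you only check $\delta'(0)=1$), and that the Maclaurin series of $L$ actually converges to $L$ on the full range of $\delta$ (nonnegative coefficients plus real-analyticity of $L$ would give this via Pringsheim's theorem, but the issue is not addressed). As it stands, the proposal correctly identifies the bound as a Taylor truncation but does not prove the inequality.
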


\subsection{Probability measures on two Euclidean spaces with different dimensions}
In this paper, we adopt the framework of \cite{lim2} to prove a version of Theorems \ref{first_main} and \ref{thm2} for pm's pairs $(P,Q)$ defined on two Euclidean spaces having different dimensions. 
Let $M(\Omega)$ denote the set of all Borel pm's on $\Omega \subset \mathbb{R}^n$. For convenience, we restrict our attention to pm's with densities so that we do not have to keep track of which measure is absolutely continuous to which other measure \cite[Section III]{lim2}; this is without loss of generality. Let $\lambda^n$ be the Lebesgue measure restricted to $\Omega\subset \mathbb{R}^n$. With respect to $\lambda^n$, we define 
\begin{align*}
    M_{dens}(\Omega):=\{\mu\in M(\Omega):\mu \text{ has density}\}.
\end{align*}
Notice that $\mu\in M_{dens}(\Omega)$ if and only if it is absolutely continuous with respect to $\lambda^n$. The Lebesgue measure is chosen because it is the most common measure; it can be substituted by any measure satisfying the condition that for any nonzero area, the measure of said area is positive. This requirement is needed to make $D^-_{KL}$, $D^+_{KL}$, $d^-_{TV}$, and $d^+_{TV}$ in Theorem \ref{cai_lim} well defined.


We now introduce the machinery that we use to project a pm to a lower dimensional space and to embed a pm to a higher dimensional space.  For any $d,n \in\mathbb{N}$, $d\leq n$, let
$$O(d,n):=\{V\in\mathbb{R}^{d\times n} : VV^\top=I_d\},$$
that is, the \textit{Stiefel manifold} of $d\times n$ matrices with orthonormal rows. 
For any $V\in O(d,n)$ and $b\in \mathbb{R}^d$, let
$$\varphi_{V,b}:\mathbb{R}^n \rightarrow \mathbb{R}^d, \quad x \mapsto \varphi_{V,b}(x):=Vx+b,$$
and for any $\mu\in M(\mathbb{R}^n)$, let $\varphi_{V,b}(\mu)\equiv {\varphi_{V,b}}_\sharp\mu$ be the pushforward of measure $\mu$ through function $\varphi_{V,b}$. That is, for every element $A$ of the sigma-algebra endowed to $\mathbb{R}^d$, $\varphi_{V,b}(\mu)(A)\equiv {\varphi_{V,b}}_\sharp\mu(A):=\mu(\varphi_{V,b}^{-1}(A))$.
\begin{definition}\label{def1}
Let $d,n\in\mathbb{N}$, $d\leq n$. For any $P\in M(\mathbb{R}^d)$ and $Q\in M(\mathbb{R}^n)$, the set of  embeddings of $P$ into $\mathbb{R}^n$ is
\begin{align*}
    \Phi^+(P,n):=\{&\alpha\in M(\mathbb{R}^n): \varphi_{V,b}(\alpha)=P,\\ 
    &\text{for some } V\in O(d,n), b\in \mathbb{R}^d\}
\end{align*}
and the set of projections of $Q$ onto $\mathbb{R}^d$ is 
\begin{align*}
    \Phi^-(Q,d):=\{&\beta\in M(\mathbb{R}^d): \varphi_{V,b}(Q)=\beta,\\
    &\text{for some } V\in O(d,n), b\in \mathbb{R}^d\}.
\end{align*}
\end{definition}

\begin{remark}
Definition \ref{def1} is stating the following. The set of embeddings of a probability measure $P$ (defined on $\mathbb{R}^d$) onto $\mathbb{R}^n$, $n\geq d$, is given by those probabilities on $\mathbb{R}^n$ whose pushforward through function $\varphi_{V,b}$ recovers $P$, for some $V\in O(d,n)$ and $b\in \mathbb{R}^d$. The set of projections of a probability measure $Q$ (defined on $\mathbb{R}^n$) onto $\mathbb{R}^d$, $n\geq d$, is given by those probabilities on $\mathbb{R}^d$ that can be written as the pushforward of $Q$ through function $\varphi_{V,b}$, for some $V\in O(d,n)$ and $b\in \mathbb{R}^d$.
\end{remark}

An important subset of $\Phi^+(P,n)$ is
\begin{align*}
    \Phi^+_{dens}(P,n):=\{&\alpha\in M_{dens}(\mathbb{R}^n): \varphi_{V,b}(\alpha)=P,\\
&\text{for some } V\in O(d,n), b\in \mathbb{R}^d\}.
\end{align*}

The following relevant result comes from \cite[Theorem III.4]{lim2}.
\begin{theorem}\label{cai_lim}\textbf{(Cai and Lim)}
Let $d,n\in\mathbb{N}$, $d\leq n$. For any $P\in M(\mathbb{R}^d)$ and $Q\in M(\mathbb{R}^n)$, let
\begin{align*}
    D_{KL}^-(P\| Q)&:=\inf_{\beta\in\Phi^-(Q,d)}D_{KL}(P\| \beta),\\
    D_{KL}^+(P\| Q)&:=\inf_{\alpha\in\Phi^+_{dens}(P,n)}D_{KL}(\alpha\| Q),\\
    d_{TV}^-(P, Q)&:=\inf_{\beta\in\Phi^-(Q,d)}d_{TV}(P, \beta),\\
    d_{TV}^+(P, Q)&:=\inf_{\alpha\in\Phi^+_{dens}(P,n)}d_{TV}(\alpha, Q).
\end{align*}
Then,
$$D_{KL}^-(P\| Q)=D_{KL}^+(P\| Q)\equiv \hat{D}_{KL}(P\| Q)$$
and
$$d_{TV}^-(P, Q)=d_{TV}^+(P, Q)\equiv \hat{d}_{TV}(P, Q).$$
\end{theorem}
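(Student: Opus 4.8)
My plan is to derive both equalities from a single fiberwise identity. First I would record, directly from Definition~\ref{def1} and the definition of $\Phi^+_{dens}$, that
\begin{align*}
D_{KL}^-(P\|Q)&=\inf_{V\in O(d,n),\,b\in\mathbb{R}^d}D_{KL}(P\|\varphi_{V,b}(Q)),\\
D_{KL}^+(P\|Q)&=\inf_{V\in O(d,n),\,b\in\mathbb{R}^d}\ \inf_{\alpha}D_{KL}(\alpha\|Q),
\end{align*}
where in the second line $\inf_\alpha$ runs over $\alpha\in M_{dens}(\mathbb{R}^n)$ with $\varphi_{V,b}(\alpha)=P$; this uses only that an infimum over a union of sets equals the iterated infimum, together with $\Phi^+_{dens}(P,n)=\bigcup_{V,b}\{\alpha\in M_{dens}(\mathbb{R}^n):\varphi_{V,b}(\alpha)=P\}$. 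The same rewriting applies to $d_{TV}^{\pm}$. It then suffices to prove that, for each fixed $V,b$ and with $\beta:=\varphi_{V,b}(Q)$, one has $\inf_\alpha D_{KL}(\alpha\|Q)=D_{KL}(P\|\beta)$ and $\inf_\alpha d_{TV}(\alpha,Q)=d_{TV}(P,\beta)$; taking $\inf_{V,b}$ then yields the theorem.

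For the ``$\geq$'' direction of the fiberwise identity I would invoke the data-processing inequality for the KL divergence and the non-expansiveness of the TV distance under pushforwards: for any admissible $\alpha$, $D_{KL}(\alpha\|Q)\geq D_{KL}(\varphi_{V,b}(\alpha)\|\varphi_{V,b}(Q))=D_{KL}(P\|\beta)$, and likewise $d_{TV}(\alpha,Q)\geq d_{TV}(P,\beta)$. For ``$\leq$'' I would exhibit a minimizer. If $P\not\ll\beta$, both sides of the KL identity equal $+\infty$ and there is nothing to prove, so assume $P\ll\beta$. Composing the projection with the measure-preserving bijection $w\mapsto w-b$ of $\mathbb{R}^d$ (which leaves KL and TV unchanged) reduces matters to $b=0$; completing $V\in O(d,n)$ to an orthogonal $n\times n$ matrix then identifies $\mathbb{R}^n$ with $\mathbb{R}^d\times\mathbb{R}^{n-d}$, with coordinates $(y,z)$, so that $\varphi_{V,0}$ becomes the coordinate projection $(y,z)\mapsto y$ and $\lambda^n$ corresponds to $\lambda^d\otimes\lambda^{n-d}$. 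Writing $q(y,z)$ for the density of $Q$, $q_\beta(y):=\int q(y,z)\,\text{d}z$ for the density of $\beta$, and $p$ for the density of $P$, I would define $\alpha^{\star}$ by
\begin{equation*}
\frac{\text{d}\alpha^{\star}}{\text{d}\lambda^n}(y,z):=p(y)\,\frac{q(y,z)}{q_\beta(y)},
\end{equation*}
noting that $\{q_\beta=0\}$ is $P$-null under $P\ll\beta$. One then checks that $\alpha^{\star}\in M_{dens}(\mathbb{R}^n)$, that its pushforward under $\varphi_{V,0}$ is exactly $P$, and that $\alpha^{\star}\ll Q$ with $\text{d}\alpha^{\star}/\text{d}Q(y,z)=p(y)/q_\beta(y)=(\text{d}P/\text{d}\beta)(y)$. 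Hence $D_{KL}(\alpha^{\star}\|Q)=\int\log\frac{\text{d}P}{\text{d}\beta}\,\text{d}\alpha^{\star}=\int\log\frac{\text{d}P}{\text{d}\beta}\,\text{d}P=D_{KL}(P\|\beta)$, the middle equality holding because the $y$-marginal of $\alpha^{\star}$ is $P$; and, integrating out $z$ first, $d_{TV}(\alpha^{\star},Q)=\tfrac12\int\bigl|p(y)-q_\beta(y)\bigr|\,\text{d}y=d_{TV}(P,\beta)$. This gives ``$\leq$'' and completes the argument.

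The step I expect to be the main obstacle is the construction of $\alpha^{\star}$: verifying that it is a bona fide probability measure with a Lebesgue density, that $\varphi_{V,0}(\alpha^{\star})=P$ holds exactly, and that the case $P\not\ll\beta$ is genuinely vacuous. This is where passing to adapted coordinates via the orthogonal completion of $V$ pays off: the fibers of $\varphi_{V,b}$ are $\lambda^n$-null, so the absolute continuity of $\alpha^{\star}$ must be read off from the product structure of the coordinates rather than from an abstract disintegration of $Q$. By contrast, the data-processing inequality for the KL divergence and the contraction of TV under pushforwards are classical. Once the fiberwise identity is in hand, the reduction $\inf_{V,b}$ is immediate and is verbatim the same for the KL and the TV halves of the statement.
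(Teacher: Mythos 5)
The paper does not actually prove this statement: it is imported verbatim from \cite[Theorem III.4]{lim2}, so there is no internal proof to compare against. Your argument is, in substance, the one in that source: reduce to a fixed pair $(V,b)$, get ``$\geq$'' from data processing, and get ``$\leq$'' by lifting $P$ along the conditional densities of $Q$ given its projection $\beta=\varphi_{V,b}(Q)$. The rewriting as iterated infima, the data-processing step, and the verifications that $\alpha^\star$ has $y$-marginal $P$, that $\mathrm{d}\alpha^\star/\mathrm{d}Q$ equals $(\mathrm{d}P/\mathrm{d}\beta)$ composed with the projection, and that this reproduces $D_{KL}(P\|\beta)$, are all correct.

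There is one genuine, though easily repaired, gap. You construct $\alpha^\star$ only under the hypothesis $P\ll\beta$ and dismiss the complementary case by noting that both sides of the KL identity are $+\infty$. That escape is unavailable for the total variation half of the theorem: both sides are finite even when $P\not\ll\beta$ (take $p$ and $q_\beta$ with partially overlapping supports, so $d_{TV}(P,\beta)<1$). On $\{q_\beta=0,\ p>0\}$, which may have positive $P$-measure, the formula $p(y)\,q(y,z)/q_\beta(y)$ is undefined, so as written you have exhibited no admissible $\alpha$ attaining $d_{TV}(P,\beta)$ in that case. The fix is to set $\mathrm{d}\alpha^\star/\mathrm{d}\lambda^n(y,z)=p(y)\,q(y,z)/q_\beta(y)$ on $\{q_\beta>0\}$ and $=p(y)\,\rho(z)$ on $\{q_\beta=0\}$ for any fixed probability density $\rho$ on $\mathbb{R}^{n-d}$. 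The $y$-marginal is still $p$; and since $q(y,\cdot)=0$ for $\lambda^{n-d}$-a.e.\ $z$ whenever $q_\beta(y)=0$, the set $\{q_\beta=0\}$ contributes $\int_{\{q_\beta=0\}}p(y)\,\mathrm{d}y=\int_{\{q_\beta=0\}}|p(y)-q_\beta(y)|\,\mathrm{d}y$ to $2\,d_{TV}(\alpha^\star,Q)$, so the identity $d_{TV}(\alpha^\star,Q)=d_{TV}(P,\beta)$ survives. With that patch your proof is complete. (Implicit throughout, and consistent with the paper's standing restriction to $M_{dens}$, is that $Q$ has a Lebesgue density; for a general Borel $Q$ one would replace the explicit ratio of densities by a disintegration of $Q$ over $\beta$.)
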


We call $\hat{D}_{KL}(P\| Q)$ the \textit{augmented KL divergence} (AKL), while $\hat{d}_{TV}(P, Q)$ the \textit{augmented TV distance} (ATV). Notice that \cite[Lemma III.2]{lim2} guarantees the existence of quantities $D_{KL}^-(P\| Q)$, $D_{KL}^+(P\| Q)$, $d_{TV}^-(P, Q)$, and $d_{TV}^+(P, Q)$.

\section{Main result}\label{main.section}
Consider function
\begin{equation}\label{L_hat}
    \delta \mapsto \hat{L}(\delta):=\inf_{\hat{d}_{TV}(P,Q)=\delta} \hat{D}_{KL}(P\| Q).
\end{equation}
Being the augmented counterpart of \eqref{tight_pinsker}, we call it the \textit{augmented Vajda's lower bound}. Denote by $\boldsymbol{\alpha} \in \Phi^+_{dens}(P,n)$ and $\boldsymbol{\beta} \in \Phi^-(Q,d)$ the pm's such that $\hat{D}_{KL}(P \| Q)={D}_{KL}(\boldsymbol{\alpha}\| Q)={D}_{KL}(P \| \boldsymbol{\beta})$, that is, 
\begin{align}\label{arginfs}
     &\boldsymbol{\alpha}=\arginf_{\beta\in\Phi^+_{dens}(P,n)}D_{KL}(\alpha\|Q) \nonumber\\
     \text{and } \quad &\boldsymbol{\beta}=\arginf_{\beta\in\Phi^-(Q,d)}D_{KL}(P\|\beta).
\end{align}
Let then
\begin{align}
    \essinf \frac{\text{d}\boldsymbol{\alpha}}{\text{d}Q}=m_1,\quad
    \esssup \frac{\text{d}\boldsymbol{\alpha}}{\text{d}Q}=M_1, \label{ess1}\\
    \essinf \frac{\text{d}P}{\text{d}\boldsymbol{\beta}}=m_2,\quad
    \esssup \frac{\text{d}P}{\text{d}\boldsymbol{\beta}}=M_2. \label{ess2}
\end{align}
Notice that \eqref{ess1} are taken with respect to $Q$, while \eqref{ess2} are taken with respect to $\boldsymbol{\beta}$. They correspond to (2) and (3) in Definition \ref{tight_rev_pins}. We need to bound the relative densities $\text{d}\boldsymbol{\alpha}/\text{d}Q$ and $\text{d}P/\text{d}\boldsymbol{\beta}$ otherwise we may have that $\hat{d}_{TV}(P,Q)= \delta$, but $\hat{D}_{KL}(P\|Q)=\infty$, similarly to what we pointed out in section \ref{intro}. We now define a set of pairs of probabilities that is the augmented counterpart of Definition \ref{tight_rev_pins}.
\begin{definition}\label{tight_rev_pins2}
Pick $d,n\in\mathbb{N}$ such that $d\leq n$. Fix $\delta\geq 0$, $m_1,m_2>0$ and $M_1,M_2<\infty$. $\mathcal{A}(\delta,m_1,m_2,M_1,M_2)$ is the set of all pm's pairs $(P,Q)$ in $M(\mathbb{R}^d)\times M(\mathbb{R}^n)$ such that
\begin{enumerate}
    \item[(i)] $\boldsymbol{\alpha}\ll Q$ and $P\ll \boldsymbol{\beta}$,
    \item[(ii)] \eqref{ess1} and \eqref{ess2} are satisfied,
    \item[(iii)] $\hat{d}_{TV}(P, Q)=\delta$.
\end{enumerate}
\end{definition}

The optimal upper bound for the AKL between (a pair of) pm's belonging to the set  $\mathcal{A}(\delta,m_1,m_2,M_1,M_2)$ is defined as
\begin{align}\label{opt_u_b_aug}
    \hat{U}(\mathcal{A}(\delta,m_1&,m_2,M_1,M_2)) \nonumber\\
    &:=\sup_{(P,Q)\in\mathcal{A}(\delta,m_1,m_2,M_1,M_2)} \hat{D}_{KL}(P\|Q).
\end{align}

The following is our main result.

\begin{theorem}\label{main3}
Pick $d,n\in\mathbb{N}$ such that $d\leq n$. Fix $\delta\geq 0$, $m_1,m_2> 0$, and $M_1,M_2<\infty$. Assume $\mathcal{A}(\delta,m_1,m_2,M_1,M_2) \neq \emptyset$. Pick any $(P,Q)$ in $\mathcal{A}(\delta,m_1,m_2,M_1,M_2)$ and let 
\begin{align*}
    \text{pol}_{\hat{d}_{TV}}&:=\frac{1}{2}\hat{d}_{TV}(P,Q)^2+\frac{1}{36}\hat{d}_{TV}(P,Q)^4\\
    &+\frac{1}{270}\hat{d}_{TV}(P,Q)^6+\frac{221}{340200}\hat{d}_{TV}(P,Q)^8.
\end{align*}
Then,
\begin{align}\label{bds2}
    \text{pol}_{\hat{d}_{TV}} &\leq \hat{L}(\delta) \nonumber\\ 
    &\leq \hat{D}_{KL}(P\| Q) \leq \hat{U}(\mathcal{A}(\delta,m_1,m_2,M_1,M_2)).
\end{align}
\end{theorem}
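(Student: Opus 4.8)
The plan is to read the three inequalities in \eqref{bds2} as separate links and to observe that only the leftmost one, $\text{pol}_{\hat{d}_{TV}}\leq\hat{L}(\delta)$, carries analytic content, the other two being immediate from the definitions of $\hat{L}$ and $\hat{U}$. Throughout I would use condition (iii) of Definition \ref{tight_rev_pins2}, namely $\hat{d}_{TV}(P,Q)=\delta$, which makes $\text{pol}_{\hat{d}_{TV}}$ the degree-$8$ polynomial of Theorem \ref{thm2} evaluated at $\delta$; write $\mathrm{pol}(\delta)$ for it. For the middle link, since $\hat{d}_{TV}(P,Q)=\delta$ the pair $(P,Q)$ is admissible in the infimum defining $\hat{L}(\delta)$ in \eqref{L_hat}, so $\hat{L}(\delta)\leq\hat{D}_{KL}(P\|Q)$. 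For the right link, since $(P,Q)\in\mathcal{A}(\delta,m_1,m_2,M_1,M_2)$ and $\hat{U}$ is by \eqref{opt_u_b_aug} the supremum of $\hat{D}_{KL}$ over that set, $\hat{D}_{KL}(P\|Q)\leq\hat{U}(\mathcal{A}(\delta,m_1,m_2,M_1,M_2))$; conditions (i)--(ii) of Definition \ref{tight_rev_pins2} are what keep this supremum finite, via $\hat{D}_{KL}(P\|Q)=D_{KL}(\boldsymbol{\alpha}\|Q)\leq\log M_1$, although that finiteness is not needed for the inequality itself.

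The substance is the bound $\mathrm{pol}(\delta)\leq\hat{L}(\delta)$, and the idea is to reduce the augmented quantities to ordinary divergences and distances on a common Euclidean space and then apply the polynomial lower bound of Theorem \ref{thm2}. Fix any pair $(P',Q')\in M(\mathbb{R}^d)\times M(\mathbb{R}^n)$ with $\hat{d}_{TV}(P',Q')=\delta$; by Theorem \ref{cai_lim},
\[
\hat{D}_{KL}(P'\|Q')=\inf_{\alpha\in\Phi^+_{dens}(P',n)}D_{KL}(\alpha\|Q'),\qquad \delta=\inf_{\alpha\in\Phi^+_{dens}(P',n)}d_{TV}(\alpha,Q').
\]
Take any $\alpha\in\Phi^+_{dens}(P',n)$. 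Being admissible in the second infimum, $\alpha$ satisfies $d_{TV}(\alpha,Q')\geq\delta$, and since $\alpha$ and $Q'$ live on the common measurable space $\mathbb{R}^n$, the definition of $L$ in \eqref{tight_pinsker} together with Theorem \ref{thm2} yields
\[
D_{KL}(\alpha\|Q')\ \geq\ L\bigl(d_{TV}(\alpha,Q')\bigr)\ \geq\ \mathrm{pol}\bigl(d_{TV}(\alpha,Q')\bigr)\ \geq\ \mathrm{pol}(\delta),
\]
the last step using that $\mathrm{pol}$ has nonnegative coefficients, hence is nondecreasing on $\mathbb{R}_+$, and $d_{TV}(\alpha,Q')\geq\delta\geq 0$. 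Taking the infimum over $\alpha$ gives $\hat{D}_{KL}(P'\|Q')\geq\mathrm{pol}(\delta)$, and then the infimum over all such $(P',Q')$ gives $\hat{L}(\delta)\geq\mathrm{pol}(\delta)=\text{pol}_{\hat{d}_{TV}}$, which closes the chain.

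The step I expect to need the most care --- and the one that explains the asymmetric form of the statement --- is the passage $d_{TV}(\alpha,Q')\geq\hat{d}_{TV}(P',Q')$. The embedding of $P'$ that (nearly) realizes the augmented \emph{KL} divergence need not be the one that realizes the augmented \emph{TV} distance, since the two variational problems in Theorem \ref{cai_lim} need not share a minimizer; hence one obtains only this one-sided inequality rather than an equality $d_{TV}(\alpha,Q')=\delta$, which is precisely why the monotonicity of $\mathrm{pol}$ (and of $L$) on $\mathbb{R}_+$ has to be used. The same point is why \eqref{bds2} asserts no closed form for $\hat{U}(\mathcal{A}(\delta,m_1,m_2,M_1,M_2))$: combining the right link with Theorem \ref{cai_lim} and \eqref{opt_u_b_value} only gives $\hat{D}_{KL}(P\|Q)=D_{KL}(\boldsymbol{\alpha}\|Q)\leq U(\mathcal{A}(d_{TV}(\boldsymbol{\alpha},Q),m_1,M_1))$, and since $d_{TV}(\boldsymbol{\alpha},Q)$ need not equal $\delta$ this does not reduce to an expression in $\delta$ alone. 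Finally I would perform the routine checks that $\mathrm{pol}(0)=0$ with $\mathrm{pol}'(\delta)>0$ for $\delta>0$, that $\Phi^+_{dens}(P',n)\neq\emptyset$ whenever $d\leq n$ (otherwise $\hat{D}_{KL}(P'\|Q')=+\infty$ and the bound is trivial), and that $\delta=\hat{d}_{TV}(P,Q)\in[0,1]$, so that Theorems \ref{fedotov}, \ref{thm2} and \ref{first_main} are applied within their natural domains.
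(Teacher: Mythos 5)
Your proposal is correct and follows essentially the same route as the paper: the middle and right inequalities are read off from the definitions \eqref{L_hat} and \eqref{opt_u_b_aug}, and the left inequality is obtained by passing through Theorem \ref{cai_lim} to a common Euclidean space, applying Theorems \ref{first_main} and \ref{thm2} there, and using monotonicity of the polynomial to compare $d_{TV}(\alpha,Q')$ with its infimum $\hat{d}_{TV}(P',Q')$ --- your explicit monotonicity step is in fact a cleaner rendering of the paper's ``infimum of a sum is not smaller than the sum of the infima'' justification. The only material you omit is the paper's two auxiliary Claims (a parametrization of $\hat{L}$ and an explicit formula for $\hat{U}$), which go beyond the inequalities actually asserted in \eqref{bds2}.
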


Before proving our result, let us remark that assuming $(P,Q)\in\mathcal{A}(\delta,m_1,m_2,M_1,M_2)$ is only needed to upper bound $\hat{D}_{KL}(P\| Q)$. The reason is that otherwise such upper bound may not exist, as pointed out earlier in this section. In addition, the second and the third inequalities in \eqref{bds2} are optimal. Finally, notice that there is an elegant relationship between Theorem \ref{thm2} and the first inequality in \eqref{bds2}. We can lower bound Vajda's bound $L(\delta)$ and the augmented Vajda's bound $\hat{L}(\delta)$ by the same polynomial, the first one in $\delta=d_{TV}(P,Q)$ and the second one in $\delta=\hat{d}_{TV}(P,Q)$.


\begin{proof}
The proof has four steps.
\begin{enumerate}
    \item[(I)] We first show that $\text{pol}_{\hat{d}_{TV}} \leq \hat{D}_{KL}(P\| Q)$. We have that
    \begin{align*}
  \hat{D}_{KL}&(P\| Q)=\inf_{\beta\in\Phi^-(Q,d)} D_{KL}(P\| \beta)\\
  &\geq \inf_{\beta\in\Phi^-(Q,d)} \bigg[ \frac{1}{2} {d}_{TV}(P,\beta)^2+\frac{1}{36}{d}_{TV}(P,\beta)^4\\
  &+\frac{1}{270}{d}_{TV}(P,\beta)^6+\frac{221}{340200}{d}_{TV}(P,\beta)^8 \bigg]\\
  &\geq \frac{1}{2} \hat{d}_{TV}(P,\beta)^2+\frac{1}{36}\hat{d}_{TV}(P,\beta)^4\\ &+\frac{1}{270}\hat{d}_{TV}(P,\beta)^6+\frac{221}{340200}\hat{d}_{TV}(P,\beta)^8.
\end{align*}
Here, the equality comes from Theorem \ref{cai_lim}, the first inequality is a consequence of Theorems \ref{first_main} and \ref{thm2}, and the second inequality comes from Theorem \ref{cai_lim} and the fact that the infimum of a sum is not smaller than the sum of the infima. Notice that if we substitute $\inf_{\beta\in\Phi^-(Q,d)}$ with $\inf_{\alpha\in\Phi^+_{dens}(P,n)}$ the proof still holds thanks to Theorem \ref{cai_lim}.

\item[(II)] The fact that $\hat{D}_{KL}(P\| Q) \geq \hat{L}(\delta)$ comes from equation \eqref{L_hat} and the assumption that $\hat{d}_{TV}(P,Q)=\delta$. We also have the following result.
\begin{claim}\label{claim_str_lb}
A version of parametrization \eqref{param_gamma} holds for $\hat{L}(\delta)$. Let then 
$$\boldsymbol{\beta}^\prime=\arginf_{\beta\in\Phi^-(Q,d)}d_{TV}(P,\beta).$$ 
If $\boldsymbol{\beta}=\boldsymbol{\beta}^\prime$, a version of equation \eqref{reid_eq} holds for $\hat{L}(\delta)$. 
\end{claim}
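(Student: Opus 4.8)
The plan is to prove the stronger statement that the augmented Vajda bound equals the ordinary one, $\hat{L}(\delta)=L(\delta)$; granting this, the curve $\hat{\gamma}\colon\delta\mapsto(\delta,\hat{L}(\delta))$ is literally the curve $\gamma$ of Theorem~\ref{fedotov}, so its differentiability, its symmetry, the parametrization \eqref{param_gamma}, and (via Corollary~\ref{reid_cor}) the explicit value \eqref{reid_eq} all carry over verbatim with $d_{TV}$ replaced by $\hat{d}_{TV}$ and $L$ by $\hat{L}$; the hypothesis $\boldsymbol{\beta}=\boldsymbol{\beta}^\prime$ will then simply be what gives the most direct route to the Reid--Williamson form. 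The bridge throughout is Theorem~\ref{cai_lim}: for every pair $(P,Q)$ one has $\hat{D}_{KL}(P\|Q)=D_{KL}(P\|\boldsymbol{\beta})$ and $\hat{d}_{TV}(P,Q)=d_{TV}(P,\boldsymbol{\beta}^\prime)$ with $\boldsymbol{\beta},\boldsymbol{\beta}^\prime\in\Phi^-(Q,d)$ probability measures on the \emph{common} space $\mathbb{R}^d$, so the extremal problem defining $\hat{L}(\delta)$ becomes one about same-space pairs, to which the cited results apply.

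First I would establish $\hat{L}(\delta)\ge L(\delta)$. Fix $(P,Q)$ with $\hat{d}_{TV}(P,Q)=\delta$. Since $\boldsymbol{\beta}\in\Phi^-(Q,d)$ while $\boldsymbol{\beta}^\prime$ is TV-optimal over $\Phi^-(Q,d)$, we have $d_{TV}(P,\boldsymbol{\beta})\ge d_{TV}(P,\boldsymbol{\beta}^\prime)=\delta$; hence $\hat{D}_{KL}(P\|Q)=D_{KL}(P\|\boldsymbol{\beta})\ge L\big(d_{TV}(P,\boldsymbol{\beta})\big)\ge L(\delta)$, where the first inequality is the lower bound of Theorem~\ref{first_main} (which, as noted there, needs no restriction on the pair) and the second uses that $L$ is non-decreasing, its parameter $t$ in Theorem~\ref{fedotov} being $\mathrm{d}L/\mathrm{d}\delta\ge 0$. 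Taking the infimum over admissible $(P,Q)$ yields $\hat{L}(\delta)\ge L(\delta)$. If moreover $\boldsymbol{\beta}=\boldsymbol{\beta}^\prime$, the pair $(P,\boldsymbol{\beta})$ satisfies $d_{TV}(P,\boldsymbol{\beta})=\delta$ \emph{exactly}, so Theorem~\ref{fedotov} and Corollary~\ref{reid_cor} apply to it as stated; this is precisely why the explicit minimum \eqref{reid_eq}, not merely the implicit parametrization, is asserted for $\hat{L}$ under the hypothesis $\boldsymbol{\beta}=\boldsymbol{\beta}^\prime$.

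For the reverse inequality $\hat{L}(\delta)\le L(\delta)$, fix $\varepsilon>0$ and choose a pair $(\tilde{P},\tilde{Q})$ of probability measures on $\mathbb{R}^d$ with densities, $\tilde{P}\ll\tilde{Q}$, bounded relative density, $d_{TV}(\tilde{P},\tilde{Q})=\delta$, and $D_{KL}(\tilde{P}\|\tilde{Q})<L(\delta)+\varepsilon$: take the finitely-supported extremal configuration underlying Corollary~\ref{reid_cor}, embed its support into $\mathbb{R}^d$ in generic position, smooth each atom by a narrow Gaussian, and adjust the weights to restore $d_{TV}=\delta$. Lift it to $P:=\tilde{P}$ and $Q:=\tilde{Q}\otimes\mathcal{N}(0,\sigma^2 I_{n-d})$ (taking $Q:=\tilde{Q}$ if $n=d$). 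The coordinate projection sends $Q$ to $\tilde{Q}$, so $\tilde{Q}\in\Phi^-(Q,d)$, which gives $\hat{D}_{KL}(P\|Q)\le D_{KL}(P\|\tilde{Q})=D_{KL}(\tilde{P}\|\tilde{Q})<L(\delta)+\varepsilon$ and $\hat{d}_{TV}(P,Q)\le d_{TV}(\tilde{P},\tilde{Q})=\delta$. It remains to check $\hat{d}_{TV}(P,Q)\ge\delta$, i.e.\ that no other $\varphi_{V,b}(Q)\in\Phi^-(Q,d)$ is TV-closer to $\tilde{P}$: for $\sigma$ large, any projection with a genuine component along the last $n-d$ coordinates spreads out to infinity and is TV-far from $\tilde{P}$, while the remaining projections are, up to a small Gaussian smoothing, rigid-motion images of $\tilde{Q}$, which by the generic placement cannot come TV-closer to $\tilde{P}$ than the identity does. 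Thus $\hat{d}_{TV}(P,Q)=\delta$, so $\hat{L}(\delta)\le L(\delta)+\varepsilon$ for all $\varepsilon>0$, and $\hat{L}=L$.

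I expect this last step to be the main obstacle: verifying that the lift does not produce a member of $\Phi^-(Q,d)$ strictly TV-closer to $P$ than $\tilde{Q}$ is — so that the constraint $\hat{d}_{TV}(P,Q)=\delta$ is met with equality rather than strict inequality, which (exactly as for the ordinary $L$) is essential — together with the routine but slightly delicate approximation of the discrete extremal pair by one with densities and bounded relative density. If controlling all projections directly proves awkward, a safe fallback is to run the construction along the one-parameter extremal family and use that $\delta'\mapsto\hat{d}_{TV}(\tilde{P}_{\delta'},\tilde{Q}_{\delta'}\otimes\mathcal{N}(0,\sigma^2 I_{n-d}))$ is continuous with value $0$ at $\delta'=0$ and value arbitrarily close to $\delta'$ for $\sigma$ large, so by the intermediate value theorem it equals the target $\delta$ at some $\delta^\ast$ close to $\delta$, whence $\hat{L}(\delta)\le L(\delta^\ast)\to L(\delta)$ by continuity of $L$. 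Everything else is bookkeeping around Theorem~\ref{cai_lim} and the monotonicity of $L$.
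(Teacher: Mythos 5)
Your route is genuinely different from the paper's. The paper proves the first part of the claim by redoing the Fedotov--Harremo\"es--Tops\o e convex-conjugate computation directly for $\hat{D}_{KL}$ and $\hat{L}$ (after reducing to a two-point state space and the signed TV), and obtains \eqref{reid_eq} in the second part by observing that under $\boldsymbol{\beta}=\boldsymbol{\beta}^\prime$ the quantity $\hat{L}(\delta)$ literally becomes an ordinary Vajda infimum $\inf_{d_{TV}(P,\boldsymbol{\beta}^\star)=\delta}D_{KL}(P\|\boldsymbol{\beta}^\star)$, to which Corollary \ref{reid_cor} applies. You instead try to prove the stronger identity $\hat{L}=L$ by a two-sided sandwich. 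Your lower-bound half, $\hat{L}(\delta)\ge L(\delta)$ via $d_{TV}(P,\boldsymbol{\beta})\ge d_{TV}(P,\boldsymbol{\beta}^\prime)=\delta$, the definition of $L$, and monotonicity of $L$ on $\delta\ge 0$, is correct and arguably cleaner than the paper's argument; note, though, that it does not by itself yield the parametrization for $\hat{L}$ unless the reverse inequality is also established.

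The reverse inequality is where you have a genuine gap, and it is exactly the one you flag. To place a same-space extremal pair $(\tilde{P},\tilde{Q})$ inside the augmented problem at level $\delta$ you must verify $\hat{d}_{TV}(P,Q)=\delta$ with equality, i.e.\ that no element of $\Phi^-(Q,d)$ is strictly TV-closer to $\tilde{P}$ than $\tilde{Q}$ is. But even when $n=d$, the set $\Phi^-(Q,d)$ contains \emph{every} rigid-motion image $V\tilde{Q}+b$ with $VV^\top=I_d$, and for a two-atom extremal configuration the isometry that swaps the two support points changes the total variation from $|p-q|$ to $|p+q-1|$, which can be strictly smaller; ``generic position'' of the atoms cannot exclude this, since any two points at a given distance can be mapped to any other such pair by a rigid motion. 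Your intermediate-value fallback does not repair this: since $\hat{d}_{TV}(P_{\delta'},Q_{\delta'})\le\delta'$ always, the parameter $\delta^\ast$ at which the augmented TV equals $\delta$ satisfies $\delta^\ast\ge\delta$, so you only obtain $\hat{L}(\delta)\le L(\delta^\ast)$ with $L(\delta^\ast)\ge L(\delta)$ --- the wrong direction --- unless you separately prove that the augmented TV of the lift converges to the ordinary TV, which is the unproven claim again. As written, the proposal therefore establishes only $\hat{L}\ge L$, which does not suffice for either assertion of the claim.
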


\begin{proof}
To prove the first part of the claim, we begin by showing that $\hat{D}_{KL}$ is convex, jointly in $P$ and $Q$. To see this, notice that, given two generic probability measures $P,Q$ on the same measurable space $(\Omega,\mathcal{F})$, \cite[Section II]{fedotov} points out that ${D}_{KL}$ is strictly convex, jointly in $P$ and $Q$. In our case, we have that $\hat{D}_{KL}(P\|Q)=\inf_{\beta\in\Phi^-(Q,d)} {D}_{KL}(P\|\beta)$; because the infimum operator preserves convexity, we can conclude that $\hat{D}_{KL}$ is convex, jointly in $P$ and $Q$. In addition, we have that $\hat{L}(\delta):=\inf_{\hat{d}_{TV}(P,Q)=\delta} \hat{D}_{KL}(P\| Q)$. Given the convexity of $\hat{D}_{KL}$, and since the infimum operator preserves convexity, we can conclude that $\hat{L}$ is convex as well. These convexity results entail that for any $\delta\geq 0$ for which $\hat{d}_{TV}(P,Q)=\delta$, there exists a unique pair $(P_\delta,Q_\delta)\in M(\mathbb{R}^d)\times M(\mathbb{R}^n)$ of probability measures such that $\hat{D}_{KL}(P\|Q)$ is minimal among all distributions with augmented total variation equal to $\delta$. 

The augmented Vajda's lower bound, then, is given by the function $\delta\mapsto \hat{L}(\delta)=\hat{D}_{KL}(P_\delta\|Q_\delta)$. Let now $\hat{\gamma}$ denote the map $\delta \mapsto (\delta,\hat{L}(\delta))$. Parameter $\delta$ cannot be used to give an explicit parametrization of $\hat{\gamma}$. Since both $\hat{D}_{KL}$ and $\hat{L}$ are convex functions, the convex conjugate \cite{rock} of both these functions can be explicitly calculated. To prove the statement, we follow the proof of \cite[Theorem 1]{fedotov}. There, the authors use parameter $t=\mathrm{d}\hat{L}/\mathrm{d}\delta$ from the convex conjugate of $\hat{L}$ to parametrize $\hat{L}$. 

Before going on, we give two remarks. The first one is that in \cite{fedotov} the authors work with the so-called \textit{signed total variation metric} $d_{TV}^s(P,Q):=2\sup_{A\in\mathcal{F}}(P(A)-Q(A)) \in [-2,2]$ between probability measures defined on the same measurable space. This is merely a convenience choice (it is easier to obtain parametrization \eqref{param_gamma}), since
$$d_{TV}(P,Q)=\begin{cases}
\frac{1}{2} d_{TV}^s(P,Q) & \text{ if } d_{TV}^s(P,Q)\geq 0\\
-\frac{1}{2} d_{TV}^s(P,Q) & \text{ if } d_{TV}^s(P,Q)< 0
\end{cases}.$$
Given that $d_{TV}^s$ is an $f$-divergence \cite[Section I]{lim2}, Theorem \ref{cai_lim} holds also if we use   $d_{TV}^s$ in place of $d_{TV}$. In particular,
\begin{align*}
    \hat{d}_{TV}^s(P,Q)&=\inf_{\beta\in\Phi^-(Q,d)}d_{TV}^s(P,\beta)\\
    &=\inf_{\alpha\in\Phi^+_{dens}(P,n)}d_{TV}^s(\alpha,Q).
\end{align*}
Notice that, because
$$\hat{d}_{TV}(P,Q)=\begin{cases}
\frac{1}{2} \hat{d}_{TV}^s(P,Q) & \text{ if } \hat{d}_{TV}^s(P,Q)\geq 0\\
-\frac{1}{2} \hat{d}_{TV}^s(P,Q) & \text{ if } \hat{d}_{TV}^s(P,Q)< 0
\end{cases},$$
in the proof that follows we abuse notation and denote by $\delta$ both the value of $\hat{d}_{TV}(P,Q)$ and that of $\hat{d}_{TV}^s(P,Q)$. The second remark is that in \cite{fedotov} the authors consider a two-elements state space on which $P$ and $Q$ are defined. As they highlight in \cite[Section II]{fedotov}, this simplification is without loss of generality since their results hold even in a continuous or a non-commutative setting. In our more general case, we keep this simplification: we assume that $P$ is defined on the two-elements state space $\Omega=\{\omega_1,\omega_2\}$, so $P=(p_1,p_2=1-p_1)$, $Q$ is defined on a higher-dimensional state space, and set $\Phi^-(Q,d)$ of projections of $Q$ onto $\Omega$ is a subset of $M(\Omega)$. This entails that $\boldsymbol{\beta}=(\beta_1,\beta_2=1-\beta_1)$. 

Let ${D}_{KL}(p_1,\beta_1):=p_1 \log(p_1/\beta_1)$. The convex conjugate of $\hat{D}_{KL}$ is
$$\hat{D}^\star(x,y)=\sup_{p_1,\beta_1} \left(\begin{pmatrix} x\\ y \end{pmatrix}\begin{pmatrix} p_1\\ \beta_1 \end{pmatrix} - {D}_{KL}(p_1,\beta_1) \right).$$
We have
\begin{align*}
    \frac{\partial}{\partial p_1} &\left(\begin{pmatrix} x\\ y \end{pmatrix}\begin{pmatrix} p_1\\ \beta_1 \end{pmatrix} - {D}_{KL}(p_1,\beta_1) \right) \\
    &= x- \log \left(\frac{p_1}{\beta_1}\right) + \log \left(\frac{p_2}{\beta_2}\right)\\
    \frac{\partial}{\partial \beta_1} &\left(\begin{pmatrix} x\\ y \end{pmatrix}\begin{pmatrix} p_1\\ \beta_1 \end{pmatrix} - {D}_{KL}(p_1,\beta_1) \right)\\
    &= y +\frac{p_1}{\beta_1}-\frac{p_2}{\beta_2}
\end{align*}
To find the point where these partial derivatives are $0$, we solve the simultaneous equations
\begin{align*}
    x&= \log \left(\frac{p_1}{\beta_1}\right) - \log \left(\frac{p_2}{\beta_2}\right)\\
    y&=\frac{p_2}{\beta_2}-\frac{p_1}{\beta_1}
\end{align*}
whose solutions are
\begin{align}\label{beta1}
    p_1&=e^x \frac{y+e^x-1}{(e^x-1)^2} \nonumber\\
    \beta_1&=\frac{1}{1-e^x}-\frac{1}{y} ,
\end{align}
$x,y\neq 0$. For\footnote{The use of the augmented signed total variation is clear here; had we used the augmented total variation (as defined in Theorem \ref{cai_lim}) instead, we would have equated $\begin{pmatrix} x\\ y \end{pmatrix}$ to $\begin{pmatrix} 0\\ t \end{pmatrix}$, since $\hat{d}_{TV}(P,Q)\in[0,1]$, for all $(P,Q)\in M(\mathbb{R}^d)\times M(\mathbb{R}^n)$.} 
\begin{align}\label{xy}
    \begin{pmatrix} x\\ y \end{pmatrix}=\begin{pmatrix} -2t\\ 2t \end{pmatrix}
\end{align}
we obtain
$$\begin{pmatrix} x\\ y \end{pmatrix}\begin{pmatrix} p_1\\ \beta_1 \end{pmatrix}=t \delta.$$
Hence,
\begin{align*}
    \hat{D}^\star(-2t,2t)&=\sup_{x,y}\left( t\delta - D_{KL}(p_1,\beta_1)\right)\\
    &=\sup_\delta \left( t\delta - \hat{L}(\delta)\right)
\end{align*}
is the convex conjugate of $\hat{L}$, and $t$ must be the derivative of $\hat{L}$. We see that \eqref{beta1} and \eqref{xy} solve our optimization problem. 
The parametrization of $\hat{\gamma}$
\begin{align}
    \delta(t)&=t\left(1-\left(\coth(t)-\frac{1}{t} \right)^2 \right) \nonumber\\
    \hat{L}(\delta(t)&)=\log\left( \frac{t}{\sinh (t)} \right) + t\coth (t) -\frac{t^2}{\sinh^2(t)} \label{eq_L_hat}
\end{align}
is then obtained by direct evaluation of the quantities involved. A visual representation of $\hat{L}(\delta(t))$ is given in Figure \ref{hat_L_curve}.

\begin{figure}[h!]
\centering
\includegraphics[width=.4\textwidth]{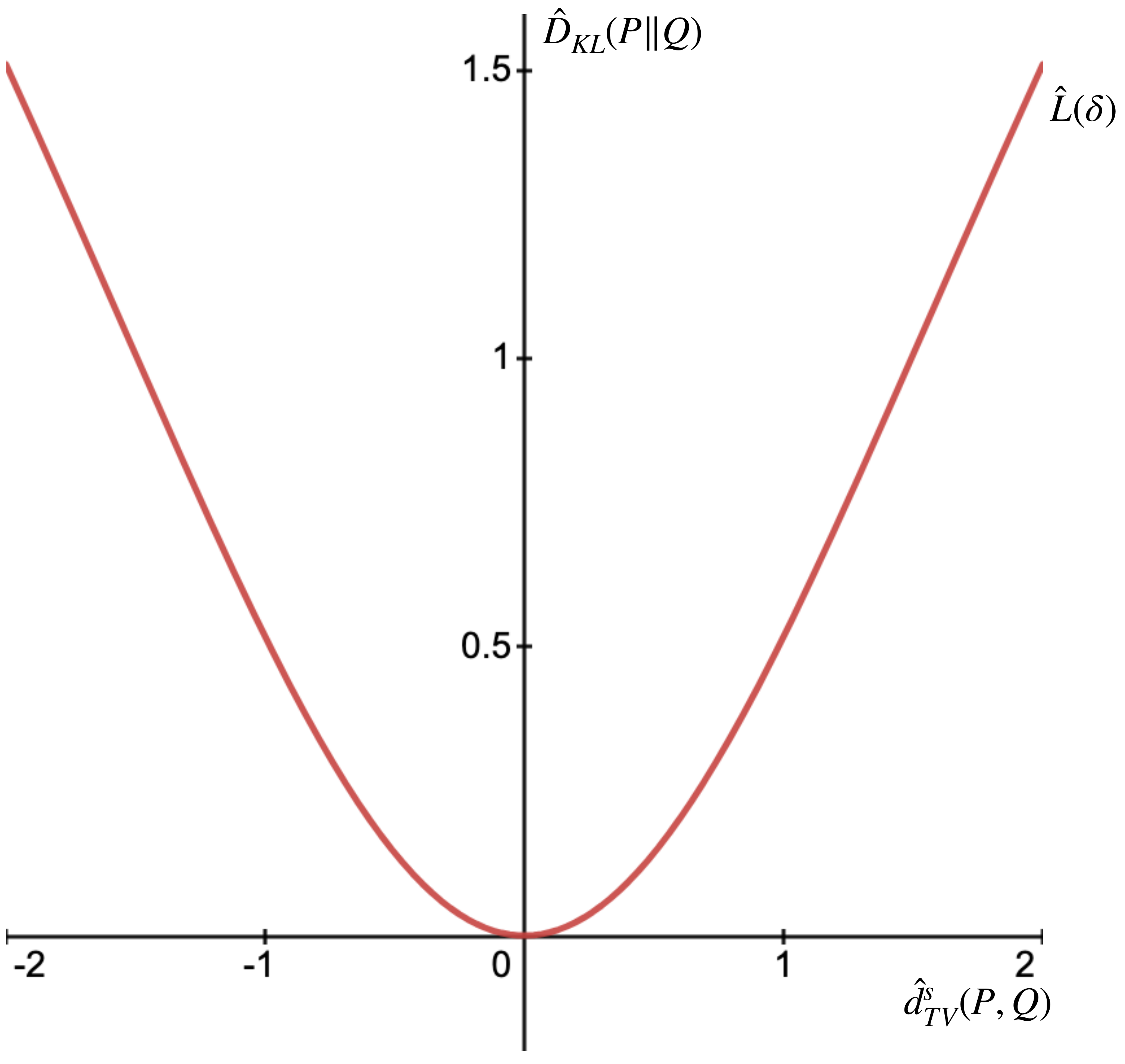}
\caption{A visual representation of $\hat{L}(\delta)$ in \eqref{eq_L_hat}. As we can see, it is symmetric around the $\hat{D}_{KL}(P\|Q)$ axis, which implies that using $\hat{d}^s_{TV}$ in place of $\hat{d}_{TV}$ does not yield any loss of generality.}
\centering
\label{hat_L_curve}
\end{figure}

Suppose now that $\boldsymbol{\beta}=\boldsymbol{\beta}^\prime\equiv\boldsymbol{\beta}^\star$. Then, this implies that we can write $\hat{L}(\delta)$ as $\inf_{d_{TV}(P,\boldsymbol{\beta}^\star)=\delta}D_{KL}(P\|\boldsymbol{\beta}^\star)$. Corollary \ref{reid_cor} entails that for 
$$\hat{d}_{TV}(P,Q)=d_{TV}(P,\boldsymbol{\beta}^\star)=\delta\geq 0,$$ 
we have that
\begin{align}
    \hat{L}&(\delta):=\inf_{\hat{d}_{TV}(P,Q)=\delta} \hat{D}_{KL}(P\| Q) \nonumber\\
    &=\inf_{{d}_{TV}(P,\boldsymbol{\beta}^\star)=\delta} {D}_{KL}(P\| \boldsymbol{\beta}^\star) \nonumber\\
    &=\min_{\gamma\in[\delta-2,2-\delta]} \bigg[ \left( \frac{\delta+2-\gamma}{4} \right) \log\left( \frac{\gamma-2-\delta}{\gamma-2+\delta} \right) \nonumber\\
    &+ \left( \frac{\gamma+2-\delta}{4} \right) \log\left( \frac{\gamma+2-\delta}{\gamma+2+\delta} \right) \bigg]. \label{reid2}
\end{align}
Notice that in this case the parametrization in \eqref{param_gamma} holds too, but it is better to express $\hat{L}(\delta)$ explicitly as in \eqref{reid2}.
\end{proof}


\item[(III)] The fact that $\hat{D}_{KL}(P\| Q) \leq \hat{U}(\mathcal{A}(\delta,m_1,m_2,M_1,M_2))$ comes from equation \eqref{opt_u_b_aug}. We also have the following result.
\begin{claim}\label{claim_ub_aug}
A version of equation \eqref{opt_u_b_value} holds for $\hat{U}(\mathcal{A}(\delta,m_1,m_2,M_1,M_2))$.
\end{claim}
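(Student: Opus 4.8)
The plan is to reduce the augmented upper bound to the non-augmented setting via Theorem \ref{cai_lim}, and then invoke the known value \eqref{opt_u_b_value} of the optimal upper bound on each of the two common measurable spaces produced by the projection/embedding construction. First I would fix a pair $(P,Q)\in\mathcal{A}(\delta,m_1,m_2,M_1,M_2)$ and recall from \eqref{arginfs} and Theorem \ref{cai_lim} that $\hat{D}_{KL}(P\|Q)={D}_{KL}(\boldsymbol{\alpha}\|Q)={D}_{KL}(P\|\boldsymbol{\beta})$, where now $(\boldsymbol{\alpha},Q)$ is a genuine pair of pm's on the common space $\mathbb{R}^n$ and $(P,\boldsymbol{\beta})$ a genuine pair of pm's on the common space $\mathbb{R}^d$. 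By item (i) of Definition \ref{tight_rev_pins2} we have $\boldsymbol{\alpha}\ll Q$ and $P\ll\boldsymbol{\beta}$, and by item (ii) together with \eqref{ess1}--\eqref{ess2} the relative densities $\text{d}\boldsymbol{\alpha}/\text{d}Q$ and $\text{d}P/\text{d}\boldsymbol{\beta}$ have essential infima $m_1,m_2$ and essential suprema $M_1,M_2$. Hence $(\boldsymbol{\alpha},Q)\in\mathcal{A}(d_{TV}(\boldsymbol{\alpha},Q),m_1,M_1)$ and $(P,\boldsymbol{\beta})\in\mathcal{A}(d_{TV}(P,\boldsymbol{\beta}),m_2,M_2)$ in the sense of Definition \ref{tight_rev_pins}.

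Second, I would apply Theorem \ref{first_main}, i.e.\ equation \eqref{opt_u_b_value}, to each of these two pairs. As in Claim \ref{claim_str_lb}, write $\boldsymbol{\alpha}^\prime=\arginf_{\alpha\in\Phi^+_{dens}(P,n)}d_{TV}(\alpha,Q)$ and $\boldsymbol{\beta}^\prime=\arginf_{\beta\in\Phi^-(Q,d)}d_{TV}(P,\beta)$, so that $d_{TV}(\boldsymbol{\alpha}^\prime,Q)=d_{TV}(P,\boldsymbol{\beta}^\prime)=\hat{d}_{TV}(P,Q)=\delta$ by Theorem \ref{cai_lim}; assuming $\boldsymbol{\alpha}=\boldsymbol{\alpha}^\prime$ and $\boldsymbol{\beta}=\boldsymbol{\beta}^\prime$ (the analogue of the hypothesis used in Claim \ref{claim_str_lb}), equation \eqref{opt_u_b_value} yields
\begin{align*}
    \hat{D}_{KL}(P\|Q) &= {D}_{KL}(\boldsymbol{\alpha}\|Q) \leq \delta\left( \frac{\log(M_1^{-1})}{1-M_1^{-1}} + \frac{\log(m_1^{-1})}{m_1^{-1}-1} \right),\\
    \hat{D}_{KL}(P\|Q) &= {D}_{KL}(P\|\boldsymbol{\beta}) \leq \delta\left( \frac{\log(M_2^{-1})}{1-M_2^{-1}} + \frac{\log(m_2^{-1})}{m_2^{-1}-1} \right),
\end{align*}
so $\hat{D}_{KL}(P\|Q)$ does not exceed the minimum of the two right-hand sides. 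Taking the supremum over $(P,Q)\in\mathcal{A}(\delta,m_1,m_2,M_1,M_2)$ in \eqref{opt_u_b_aug} then gives the "$\leq$'' half of the target identity
\[ \hat{U}(\mathcal{A}(\delta,m_1,m_2,M_1,M_2)) = \delta\cdot\min_{i\in\{1,2\}}\left( \frac{\log(M_i^{-1})}{1-M_i^{-1}} + \frac{\log(m_i^{-1})}{m_i^{-1}-1} \right), \]
with the usual convention that the $i$-th bracket is read as $0$ when $m_i=1$ or $M_i=1$, which forces $\delta=0$.

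For the reverse "$\geq$'' inequality I would exhibit a pair in $\mathcal{A}(\delta,m_1,m_2,M_1,M_2)$ that realizes (or approaches) the smaller of the two brackets: starting from the two-point extremal configuration that makes \cite[Equation (9)]{binette} tight on the relevant $d$- or $n$-dimensional common space, one embeds/projects it through a suitable $\varphi_{V,b}$ of Definition \ref{def1} and verifies, via Theorem \ref{cai_lim}, that the augmented total variation of the resulting pair equals $\delta$ and that its augmented KL divergence equals the extremal value; nonemptiness of $\mathcal{A}(\delta,m_1,m_2,M_1,M_2)$ guarantees this construction is not vacuous.

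The main obstacle I expect is precisely the reconciliation between the KL-optimal and the TV-optimal embedding/projection: \emph{a priori} the $\boldsymbol{\alpha},\boldsymbol{\beta}$ of \eqref{arginfs} need not minimize $d_{TV}$, so $d_{TV}(\boldsymbol{\alpha},Q)$ and $d_{TV}(P,\boldsymbol{\beta})$ could strictly exceed $\delta$, and then the factor multiplying the bracket is no longer $\delta$. Carrying the argument through cleanly either requires the coincidence hypothesis $\boldsymbol{\alpha}=\boldsymbol{\alpha}^\prime$, $\boldsymbol{\beta}=\boldsymbol{\beta}^\prime$ (as done for the lower bound in Claim \ref{claim_str_lb}), or else a monotonicity argument showing the right-hand side of \eqref{opt_u_b_value} is nondecreasing in its total-variation argument, combined with a separate control of $d_{TV}(\boldsymbol{\alpha},Q)$ and $d_{TV}(P,\boldsymbol{\beta})$ in terms of $\delta$. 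A secondary difficulty is the tightness ("$\geq$'') step, because the extremal pair must be compatible with the Stiefel-manifold pushforward constraints of Definition \ref{def1}, not an arbitrary pair on a common space.
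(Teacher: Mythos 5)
Your core strategy is the paper's: use Theorem \ref{cai_lim} to turn the augmented quantity into two genuine same-space problems (the pair $(\boldsymbol{\alpha},Q)$ on $\mathbb{R}^n$ and the pair $(P,\boldsymbol{\beta})$ on $\mathbb{R}^d$), apply \eqref{opt_u_b_value} to each, and combine the two resulting bounds $U_1,U_2$. The executions differ in three ways. First, the paper does not evaluate the bound at the arg-infima $\boldsymbol{\alpha},\boldsymbol{\beta}$; it inserts the inequality $D_{KL}(P\|\beta)\leq d_{TV}(P,\beta)\,C_2$ \emph{inside} the infimum over all $\beta\in\Phi^-(Q,d)$ (and likewise over all $\alpha\in\Phi^+_{dens}(P,n)$), so that the prefactor collapses to $\inf_\beta d_{TV}(P,\beta)=\hat{d}_{TV}(P,Q)=\delta$ by Theorem \ref{cai_lim} without any hypothesis that the KL-minimizer and the TV-minimizer coincide. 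This is why the paper's Claim \ref{claim_ub_aug}, unlike the second half of Claim \ref{claim_str_lb}, carries no condition of the form $\boldsymbol{\beta}=\boldsymbol{\beta}^\prime$; your version proves a strictly weaker, conditional statement. That said, the obstacle you name is real: the constants $m_2,M_2$ in \eqref{ess2} are defined relative to the specific $\boldsymbol{\beta}$, so the pointwise inequality the paper uses for an arbitrary $\beta$ is not literally licensed by Definition \ref{tight_rev_pins2}; the paper glosses over exactly the reconciliation you worry about, and your route (bound at $\boldsymbol{\beta}$ only, then face $d_{TV}(P,\boldsymbol{\beta})\geq\delta$) makes the gap visible rather than hiding it. Second, you combine $U_1$ and $U_2$ with a minimum, whereas the paper's stated value of $\hat{U}(\mathcal{A}(\delta,m_1,m_2,M_1,M_2))$ is the \emph{maximum} of the two expressions; from the two one-sided inequalities $\hat{D}_{KL}\leq U_1$ and $\hat{D}_{KL}\leq U_2$ the minimum is the correct (and sharper) deduction, so your formula does not match the one the paper prints. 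Third, you sketch an extremal construction to establish that the bound is attained; the paper offers no tightness argument at all and simply declares the maximum ``optimal.'' Net assessment: your reduction and use of \eqref{opt_u_b_value} are sound and faithful to the paper's idea, but you should either adopt the paper's trick of bounding uniformly under the infimum (and acknowledge the implicit uniformity assumption on the relative-density bounds) or keep your coincidence hypothesis and flag that the claim as stated in the paper is unconditional; and you should reconcile your $\min$ with the paper's $\max$ before asserting equality with the supremum in \eqref{opt_u_b_aug}.
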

\begin{proof}
We have that 
\begin{align*}
  &\hat{D}_{KL}(P\| Q)=\inf_{\beta\in\Phi^-(Q,d)} D_{KL}(P\| \beta)\\ 
  &\leq \inf_{\beta\in\Phi^-(Q,d)} {d}_{TV}(P,\beta)\left( \frac{\log(M_{2}^{-1})}{1-M_{2}^{-1}} + \frac{\log(m_{2}^{-1})}{m_{2}^{-1}-1}  \right)\\
  &=\hat{d}_{TV}(P,Q)\left( \frac{\log(M_{2}^{-1})}{1-M_{2}^{-1}} + \frac{\log(m_{2}^{-1})}{m_{2}^{-1}-1}  \right)=:U_2.
\end{align*}
Here, the equalities come from Theorem \ref{cai_lim}, and the inequality comes from equation \eqref{opt_u_b_value}. We also have that 
\begin{align*}
  &\hat{D}_{KL}(P\| Q)=\inf_{\alpha\in\Phi^+_{dens}(P,n)} D_{KL}(\alpha\| Q)\\ 
  &\leq \inf_{\alpha\in\Phi^+_{dens}(P,n)} {d}_{TV}(\alpha, Q)\left( \frac{\log(M_{1}^{-1})}{1-M_{1}^{-1}} + \frac{\log(m_{1}^{-1})}{m_{1}^{-1}-1}  \right)\\
  &=\hat{d}_{TV}(P, Q)\left( \frac{\log(M_{1}^{-1})}{1-M_{1}^{-1}} + \frac{\log(m_{1}^{-1})}{m_{1}^{-1}-1}  \right)=:U_1.
\end{align*}
Once more, the equalities come from Theorem \ref{cai_lim}, and the inequality comes from equation \eqref{opt_u_b_value}. Hence, by selecting the largest between $U_1$ and $U_2$ we find the desired (optimal) upper bound for $\hat{D}_{KL}(P\| Q)$
\begin{align*}
    \hat{U}(\mathcal{A}(\delta,&m_1,m_2,M_1,M_2))=\\
    \max\bigg\{ &\hat{d}_{TV}(P,Q)\left( \frac{\log(M_{1}^{-1})}{1-M_{1}^{-1}} + \frac{\log(m_{1}^{-1})}{m_{1}^{-1}-1}  \right) ,\\ 
    &\hat{d}_{TV}(P,Q)\left( \frac{\log(M_{2}^{-1})}{1-M_{2}^{-1}} + \frac{\log(m_{2}^{-1})}{m_{2}^{-1}-1}  \right)  \bigg\}.
\end{align*}
Notice that in the case where $m_1=1$ or $M_1=1$, then $U_1$ is understood as being equal to $0$. A similar reasoning holds for the case where $m_2=1$ or $M_2=1$, with $U_2$ in place of $U_1$.
\end{proof}

\item[(IV)] Finally, we show that $\text{pol}_{\hat{d}_{TV}} \leq \hat{L}(\delta)$. We have that
\begin{align*}
    &\hat{L}(\delta):=\inf_{\hat{d}_{TV}(P,Q)=\delta}\hat{D}_{KL}(P\| Q)\\
    &=\inf_{\hat{d}_{TV}(P,Q)=\delta}\inf_{\beta\in\Phi^-(Q,d)}{D}_{KL}(P\| \beta)\\
    &\geq \inf_{\hat{d}_{TV}(P,Q)=\delta}\inf_{\beta\in\Phi^-(Q,d)}\bigg[\frac{1}{2}d_{TV}(P,\beta)^2 \\
    &+\frac{1}{36}d_{TV}(P,\beta)^4 +\frac{1}{270}d_{TV}(P,\beta)^6\\
    &+\frac{221}{340200}d_{TV}(P,\beta)^8 \bigg]\\
    &\geq \inf_{\hat{d}_{TV}(P,Q)=\delta} \bigg[\frac{1}{2} \hat{d}_{TV}(P,Q)^2+\frac{1}{36}\hat{d}_{TV}(P,Q)^4\\ &+\frac{1}{270}\hat{d}_{TV}(P,Q)^6+\frac{221}{340200}\hat{d}_{TV}(P,Q)^8\bigg]\\
    &=\frac{1}{2}\delta^2+\frac{1}{36}\delta^4+\frac{1}{270}\delta^6+\frac{221}{340200}\delta^8.
\end{align*}
Here, the first equality comes from definition \eqref{L_hat}, the second equality comes from Theorem \ref{cai_lim}, the first inequality comes from Theorem \ref{thm2}, and the second inequality comes from the fact that the infimum of a sum is not smaller than the sum of the infima. The last equality comes from our assumption that $\hat{d}_{TV}(P,Q)=\delta$. Notice that if we substitute $\inf_{\beta\in\Phi^-(Q,d)}$ with $\inf_{\alpha\in\Phi^+_{dens}(P,n)}$ the proof still holds thanks to Theorem \ref{cai_lim}.
\end{enumerate}
\end{proof}

Theorem \ref{main3} is extremely important because for a given value $\delta$ of the augmented TV distance between two generic distributions, it gives us immediately a lower bound for the augmented KL divergence. In addition, if the essential suprema and essential infima in \eqref{ess1} and \eqref{ess2} are well defined, Theorem \ref{main3} also gives an upper bound for the augmented KL divergence. The next example gives another interesting byproduct of our main result.

\begin{example}\label{ex1}
Consider a one-dimensional Gaussian distribution and write $\rho_1=\mathcal{N}(\mu,\sigma^2)$, where $\mu\in\mathbb{R}$ is the mean and $\sigma^2>0$ is the variance. Consider then an $n$-dimensional Gaussian distribution and write $\rho_2=\mathcal{N}_n(\nu,\Sigma)$, where $\nu\in\mathbb{R}^n$ is the mean vector and $\Sigma \in \mathbb{R}^{n\times n}$ is the covariance matrix. Call $\zeta_1$ and $\zeta_n$ the largest and smallest eigenvalues of $\Sigma$, respectively. Then, \cite[Example VI.2]{lim2} shows that
$$\hat{D}_{KL}(\rho_1\|\rho_2)=\begin{cases}
    \frac{1}{2} \left[ \frac{\sigma^2}{\zeta_n} -1 + \log \left( \frac{\zeta_n}{\sigma^2} \right) \right] & \text{if } \sigma < \sqrt{\zeta_n}\\
    \frac{1}{2} \left[ \frac{\sigma^2}{\zeta_1} -1 + \log \left( \frac{\zeta_1}{\sigma^2} \right) \right] & \text{if } \sigma > \sqrt{\zeta_n}\\
    0 & \text{otherwise}
\end{cases}.$$
Call now $\xi$ the value taken by $\hat{D}_{KL}(\rho_1\|\rho_2)$. Then, by Theorem \ref{main3}, we find an upper bound to $\hat{d}_{TV}(\rho_1,\rho_2)$ by solving
$$\frac{1}{2}\delta^2+\frac{1}{36}\delta^4+\frac{1}{270}\delta^6+\frac{221}{340200}\delta^8 \leq \xi,$$
where $\delta=\hat{d}_{TV}(\rho_1,\rho_2)$. 

If instead we let $\rho_1$ and $\rho_2$ be a truncated one- and $n$-dimensional Gaussian, respectively, then we can use  $\hat{D}_{KL}(\rho_1\|\rho_2) \leq \hat{U}(\mathcal{A}(\delta,m_1,m_2,M_1,M_2))$ from Theorem \ref{main3} and
\begin{align*}
    \hat{U}(\mathcal{A}(\delta,&m_1,m_2,M_1,M_2))=\\
    \max\bigg\{ &\hat{d}_{TV}(P,Q)\left( \frac{\log(M_{1}^{-1})}{1-M_{1}^{-1}} + \frac{\log(m_{1}^{-1})}{m_{1}^{-1}-1}  \right) ,\\ 
    &\hat{d}_{TV}(P,Q)\left( \frac{\log(M_{2}^{-1})}{1-M_{2}^{-1}} + \frac{\log(m_{2}^{-1})}{m_{2}^{-1}-1}  \right)  \bigg\}.
\end{align*}
from Claim \ref{claim_ub_aug} to find a lower bound for $\delta=\hat{d}_{TV}(\rho_1,\rho_2)$. Notice that in this case we need the Gaussians to be truncated to ensure the essential suprema and essential infima in \eqref{ess1} and \eqref{ess2} are well defined.
\end{example}

Before concluding we point out that generalizing the proof that leads to equation \eqref{reid2} to the $\boldsymbol{\beta}\neq\boldsymbol{\beta}^\prime$ case is not easy; although we conjecture that a similar result holds, this will be the subject of future studies.

\section{Conclusion}\label{concl}
In this note, we presented optimal upper and lower bounds for the augmented KL divergence in terms of the augmented TV distance. This is just the first step towards a deep study of augmented divergences that ideally should include structural properties, statistical analysis, duality, possible applications, and many more aspects. We plan to be at the forefront of this process.

More concretely, in the near future we plan to find bounds for more augmented divergences in terms of augmented metrics and vice versa, in the spirit of \cite{gibbs}. It would be especially interesting to generalize \cite[Theorem 6]{reid} to the augmented framework of \cite{lim2}. An encouraging result of this kind  is presented in \cite[Corollary III.6]{lim2}: the authors give a bound for the augmented TV metric in terms of the augmented Hellinger squared divergence. We also plan to extend the second part of Claim \ref{claim_str_lb} to the $\boldsymbol{\beta}\neq\boldsymbol{\beta}^\prime$ case.


\section*{Acknowledgements}\label{ackn}
We would like to thank Edric Tam, Yuhang Cai, and Vittorio Orlandi for their help with technical details, and Insup Lee, Oleg Sokolsky, Souradeep Dutta, Radoslav Ivanov, Kuk Jang, and Vivian Lin for inspiring this project and helpful discussions. Our deepest gratitude goes also to Sayan Mukherjee for covering the article processing charges and to two anonymous referees for their generous suggestions regarding content and presentation.

\bibliographystyle{plain}
\bibliography{References}

\begin{IEEEbiography}[{\includegraphics[width=1in,height=1.25in,clip,keepaspectratio]{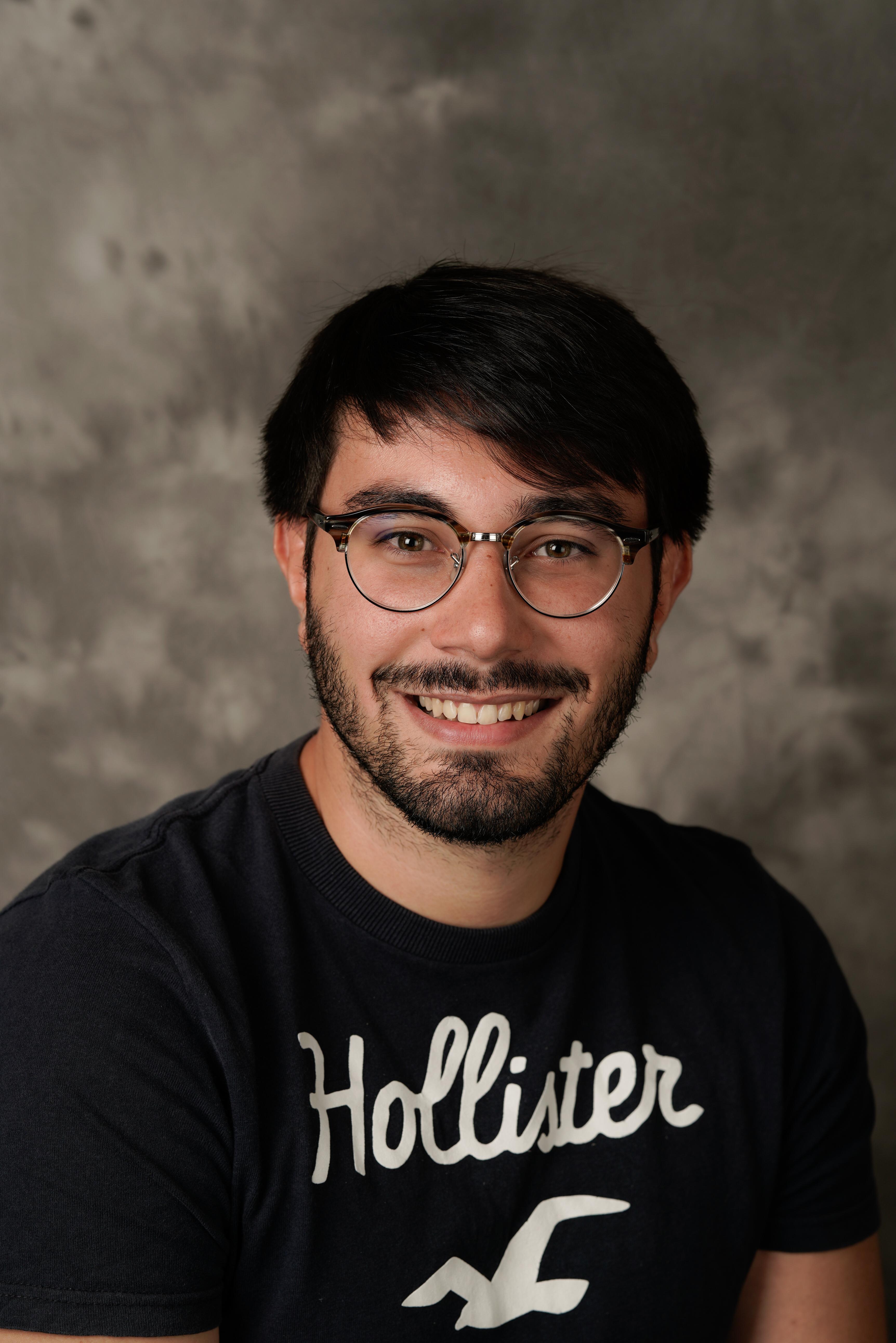}}]{Michele Caprio} received his BSc (in 2015) and MSc (in 2018) in Economics from Bocconi University in Milan, Italy, and his PhD (in 2022) in Statistics from Duke University in Durham, North Carolina, USA. 

He is a Postdoctoral Researcher at the PRECISE Center of the Department of Computer and Information Science of the University of Pennsylvania in Philadelphia, Pennsylvania, USA. His broad research interests are foundations of probability, mathematical statistics, and AI. More specifically, he is interested in imprecise probabilities and their applications to statistics and AI. 

Dr. Caprio was awarded the Aleane Webb Dissertation Research Fellowship and the IMS Hannan Travel Award; in 2022, he was a finalist for the NESS Student Research Award.
\end{IEEEbiography}

\EOD
\end{document}